\documentclass[10 pt,a4paper]{article}
\usepackage[toc]{appendix}
\usepackage{amsmath}
\usepackage{tocbibind}
\usepackage{stmaryrd}              
\usepackage{verbatim}
\usepackage{comment}
\usepackage{mathrsfs}
\usepackage{mathtools}
\usepackage{amsthm} 
\usepackage{amssymb} 
\usepackage{tikz-cd}
\usepackage{mdframed}
\usepackage[margin=2.5 cm]{geometry}
\usepackage{enumerate}
\usepackage{bm}
\usepackage[colorlinks={true},linkcolor={blue},citecolor={blue}, urlcolor={blue}]{hyperref}
\usepackage[noabbrev,nameinlink]{cleveref}
\usepackage{faktor}
\usepackage{tikz}
\usetikzlibrary{spy,calc,intersections,through,hobby,patterns,decorations.markings,arrows.meta,shapes.misc}
\theoremstyle{plain}
\usepackage[textsize=footnotesize]{todonotes}

\newtheorem{thm}{Theorem}[section]
\newtheorem*{thm*}{Theorem}
\newtheorem{prop}[thm]{Proposition}
\newtheorem{cor}[thm]{Corollary}
\newtheorem{lem}[thm]{Lemma}
\theoremstyle{definition}
\newtheorem{defn}[thm]{Definition}
\newtheorem{conj}[thm]{Conjecture}

\newtheorem{rem}[thm]{Remark}

\DeclareMathOperator{\Tr}{Tr}

\DeclareMathOperator{\rk}{rk}
\DeclareMathOperator{\Aut}{Aut}

\DeclareMathOperator{\diam}{diam}

\DeclareMathOperator{\Sing}{Sing}

\newcommand{\CC}{\mathbb{C}}

\author{Paolo Dolce}
\title{Kobayashi length bounds on bordered surfaces and generalized integral points on abelian varieties}
\date{}

\newcommand{\Addresses}{{%
  \bigskip
  \bigskip
  \footnotesize
  P.~Dolce, \textsc{Institute for Theoretical Sciences, Westlake University, China}\par\nopagebreak
  \textit{E-mail address}: \texttt{dolce@westlake.edu.cn}%
}}

\begin{document}
\maketitle

\begin{abstract}
Let $B$ be a compact Riemann surface and $B_0\subset B$ a bordered hyperbolic subsurface obtained by removing finitely many disjoint closed disks. Fix a nontrivial loop $\alpha$ in $B_0$. For $s\ge 0$, let $L(\alpha,s)$ denote the supremum, over all finite subsets $S\subset B_0$ with $\#S\le s$, of the minimal Kobayashi length of a loop in $B_0\smallsetminus S$ that is freely homotopic to $\alpha$ in $B_0$. Phung in \cite{Phung} proved that $L(\alpha,s)$ grows at most linearly and at least as $\sqrt{s}/\log s$. We sharpen the upper bound to $O(\sqrt{s\log s})$, which determines $\lim_{s\to\infty}\frac{\log L(\alpha,s)}{\log s}=\frac{1}{2}$, answering a question raised in \cite[Question 1.4]{Phung}. As an application, we improve the counting bound for generalized integral points on abelian varieties over complex function fields: for an abelian variety of dimension~$n$  over~$\mathbb C(B)$, Phung proved that the number of $(s, B_0)$-generalized integral points modulo the constant trace grows at most as $s^{2nk}$, where  $k=\rk(\pi_1(B_0))$. We sharpen this to $s^{nk+\varepsilon}$ for every $\varepsilon>0$, halving the exponent.
\end{abstract}
\makeatletter
\@starttoc{toc}
\makeatother

\section{Introduction}

\subsection{Generalized integral points and the Lang--Vojta conjecture}
Let $k$ be an algebraically closed field of characteristic $0$ and let $B$ be a non-singular, irreducible, projective  algebraic curve over~$k$ with function field $K=k(B)$. Let $X$ be a smooth, integral, projective $K$-variety and fix a reduced effective divisor $D\subset X$. A \emph{model} of $(X,D)$ is a pair $(\mathcal X,\mathcal D)$ where $\mathcal X$ is a normal $k$-variety equipped with a proper, flat morphism $f\colon \mathcal X\to B$ satisfying $\mathcal X_K\cong X$, and $\mathcal D\subset\mathcal X$ is a horizontal divisor with $\mathcal D_K\cong D$. Such a model always exists. We also define
\[
\Sing(f):=\{b\in B\colon \mathcal X_b \text{ is singular}\} \,.
\]
The set of rational points $X(K)$ corresponds bijectively to the sections of $f$: for any $P\in X(K)$, let $\sigma_P\colon B\to\mathcal X$ be the section obtained as the Zariski closure of~$P$ in~$\mathcal X$. By abuse of notation we will simply write $\sigma_P\in X(K)$. Given a finite set $S\subset B$, the set of \emph{$(S,\mathcal D)$-integral points} of $\mathcal X$ is
\[
\mathcal X(\mathcal O_{S,\mathcal D}):=\{\sigma_P\in X(K)\colon f(\sigma_P(B)\cap\mathcal D)\subseteq S \}\,,
\]
that is, the set of sections whose image doesn't intersect the divisor $\mathcal D$  ``above''~$B\smallsetminus S$.

\medskip

Recall that the pair $(X,D)$ is said to be of \emph{log general type} when $D$ is a normal crossings divisor and the log canonical bundle $\omega_X(D)$ is big.  A special case is when  $X=A$ is an abelian variety:  the canonical bundle $\omega_A$ is trivial, so the log canonical bundle $\omega_A(D)\cong\mathcal O_A(D)$ is big whenever $D$ is big.

The \emph{geometric Lang--Vojta conjecture} predicts strong constraints for the integral points of pairs of log general type
\begin{conj}[Geometric Lang--Vojta]\label{conj:lang-vojta}
Let $(X,D)$ be a  pair of log general type and let $(\mathcal X, \mathcal D)$ be a model. Then there exist a proper closed subset $Z=Z(X_{\overline K},D_{\overline K})\subset X\smallsetminus D$ and a real constant $m=m(X_{\overline K},D_{\overline K})>0$ with the following property:  for every $\sigma_P\in\mathcal X(\mathcal O_{S,\mathcal D})$ with $\sigma_P(B)\not\subset\mathcal  Z$, where $\mathcal Z$ denotes the Zariski closure of $Z$ in~$\mathcal X$, we have
\begin{equation}\label{eq:lang-vojta}
\deg_{B}\sigma_P^*\mathcal D\;\le\; m\,\max\{1,\,2g(B)-2+\# S\}\,.
\end{equation}
\end{conj}
Roughly speaking it says that every integral point either lies in a fixed \emph{exceptional} (proper) closed set $Z$ or has height bounded \emph{linearly} in the Euler characteristic $2g(B)-2+\#S$, with slope independent of the base. 

The geometric Lang-Vojta conjecture has been settled in very few cases: when $X$ is a curve; when $X=\mathbb P^2$ and $D$ has a special shape; when $X=A$ is an abelian variety with trivial $K|k$ trace or when $k=\mathbb C$ and $A$ is defined over $\mathbb C$ (constant case).  For a quick review of the known cases we refer the reader to \cite[Section 1]{Phung} and we remark that for a general complex abelian variety --- in particular, when $\mathrm{Tr}_{K/\mathbb C}(A)\neq 0$ and $A$ is nonconstant --- the conjecture remains open.

\medskip

 From now on we fix the base field $k=\mathbb C$,  a model $(\mathcal X,\mathcal D)$ of $(X,D)$, and a Riemannian metric $\rho$ on~$B$ inducing a path-length~$\ell_{\rho}$ and a distance function $d_{\rho}(\cdot,\cdot)$. We shall use two distinct notions of ``disk'', and we keep them notationally separate throughout.
\begin{itemize}
\item A \emph{(closed topological) disk} in~$B$ is a closed subset $\overline\Delta\subset B$ homeomorphic to the closed unit disk $\overline{\mathbb D}=\{z\in\mathbb C\colon|z|\le1\}$, with piecewise-$C^1$ boundary $\partial\overline\Delta$. These are the disks removed from $B$ to form the bordered surface $B_0$; we denote them $\overline\Delta_1,\dots,\overline\Delta_t$. We do \emph{not} require them to be metric balls---indeed the enlargements performed in \S\ref{sec:phung_proof} produce topological disks that are not metric balls.
\item A \emph{metric ball} (or \emph{metric disk}) is a set of the form $\Delta_\rho(x_0,r):=\{b\in B\colon d_\rho(b,x_0)<r\}$, with closed version $\overline{\Delta}_\rho(x_0,r):=\{b\in B\colon d_\rho(b,x_0)\le r\}$, for $x_0\in B$ and $r>0$. These appear only in the local integral estimates of \S\ref{sec:growth} (e.g.\ Lemma~\ref{lem:main_lem}) and carry genuine metric content. Any metric ball of sufficiently small radius is one instance of a topological disk. 
\end{itemize}
\medskip

The notion of $(S,\mathcal D)$-integral point can be generalized by allowing $S$
to vary while keeping only its cardinality $s=\#S$ bounded, and by requiring the intersection condition to hold only on the complement of finitely many disjoint closed disks. This notion of generalized integral points was introduced in \cite{Phung} and we recall it below.

\begin{defn}
 Let $t\in\mathbb Z_{\ge 0}$. Consider $t$ disjoint disks $\overline \Delta_1,\ldots,\overline \Delta_t$ in $B$ such that $\Sing(f)\subset \bigsqcup_{i=1}^t\overline \Delta_i$  and distinct points of $\Sing(f)$ are contained in distinct disks. Set $B_0:=B\smallsetminus \bigsqcup_{i=1}^t\overline \Delta_i$,   for any $s\in\mathbb Z_{\ge 0}$  
 \[
 I(s,B_0):=\{\sigma_P\in X(K)\colon \#\left(f(\sigma_P(B_0)\cap\mathcal D)\right)\le s\}
 \]
  is the set of \emph{generalized $(s,B_0)$-integral points of $(\mathcal X,\mathcal D)$}.
\end{defn}
One immediately observes the inclusion
\[
\bigcup_{\substack{S\subset B\\ \#S\le s}}\mathcal X(\mathcal O_{S,\mathcal D})\;\subseteq\; I(s,B_0)\,,
\]
 so $I(s,B_0)$ is potentially much larger than the integral points for any single fixed~$S$. Understanding the growth of $\#I(s,B_0)$ as a function of~$s$ (with $B_0$ fixed) is the main object of this paper.

\subsection{Main results}
Building on Parshin's foundational work~\cite{Parshin},
Phung in~\cite{Phung} proved an important quantitative bound on
$\#I(s,B_0)$ for abelian varieties, working in the following setting.

\medskip
\noindent\texttt{(P)} Let $A$ be an abelian variety of dimension~$n$ over
$K=\mathbb C(B)$ and let $\emptyset\neq D\subset A$ be a reduced effective divisor. We
fix a model $(\mathcal A, \mathcal D)$, where the proper flat morphism is
$f\colon \mathcal A\to B$ and $\sigma_O\colon B\to\mathcal A$ is a fixed
section. We assume:
\begin{itemize}
\item[(i)] There exist $t\in\mathbb Z_{\ge 0}$ disjoint disks
  $\overline\Delta_1,\ldots,\overline\Delta_t$ in $B$ such that
  $\Sing(f)\subset\bigsqcup_{i=1}^t\overline\Delta_i$ and distinct points
  of $\Sing(f)$ are contained in distinct disks.  We define
  \[
  B_0:=B\smallsetminus\bigsqcup_{i=1}^t\overline\Delta_i
  \]
  so that $f\colon\mathcal A_{B_0}\to B_0$ is a family of abelian
  varieties $(A_b,\sigma_O(b))_{b\in B_0}$.
\item[(ii)] $B_0$ is hyperbolic (automatic whenever $t\ge 1$).
\item[(iii)] $D\subset A$ does not contain any translates of nonzero
  abelian subvarieties.
\end{itemize}

\medskip
\noindent Phung's main result is the following.

\begin{thm}[Phung {\cite[Theorem~A]{Phung}}]\label{thm:main_phung}
Assume that the setting \texttt{(P)} holds.  Then there exists
$m:=m(\mathcal A,\mathcal D,B_0)\in\mathbb R_{>0}$ such that
\begin{equation}\label{eq:main_bound_phung}
\#\left(\faktor{I(s,B_0)}{\Tr_{K/\mathbb C}(A)(\mathbb C)}\right)
\le m(s+1)^{2n\rk(\pi_1(B_0,b_0))}\,.
\end{equation}
\end{thm}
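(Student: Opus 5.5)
Following Parshin's hyperbolic method, I will translate counting points into bounding the size of monodromy images. First, each section $\sigma_P\in I(s,B_0)$ restricts to a holomorphic section of the family $\mathcal A_{B_0}\to B_0$. Let $S_P:=f(\sigma_P(B_0)\cap\mathcal D)$, so $\#S_P\le s$. On $B_0\smallsetminus S_P$ the section takes values in $\mathcal A\smallsetminus\mathcal D$. Lift $\sigma_P$ to the universal cover of each fibre, using the period lattice $\Lambda_b\subset T_bA_b$. Following a loop $\gamma$ based at $b_0$, the lift changes by a period, and this gives a homomorphism $\pi_1(B_0,b_0)\to\Lambda_{b_0}$ twisted by the monodromy, i.e.\ a crossed homomorphism. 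Choose free generators $\gamma_1,\dots,\gamma_k$ of $\pi_1(B_0,b_0)$, where $k=\rk\pi_1(B_0,b_0)$. Then $P$ determines a tuple of periods $(\lambda_1(P),\dots,\lambda_k(P))\in\Lambda_{b_0}^k\cong\mathbb Z^{2nk}$.

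The first key step is \emph{injectivity modulo the trace}. If two points $P,Q$ give the same period data, then $P-Q$ lifts to a global single-valued holomorphic section of the universal cover over $B_0$. By rigidity (Lang--Néron, or Parshin's argument) $P-Q$ then lies in $\Tr_{K/\mathbb C}(A)(\mathbb C)$, up to a finite ambiguity coming from torsion. So the count is bounded by the number of attainable period tuples, times a constant.

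The second key step is a \emph{height bound on periods}. Fix a Hermitian norm on the period lattices that varies continuously over the compact closure of $B_0$ after shrinking. By hypothesis (iii), $\mathcal A_{b}\smallsetminus\mathcal D_b$ is hyperbolically embedded in $\mathcal A_b$, uniformly on a compact neighbourhood of $\overline{B_0}$ (Green/Kobayashi). Hence the map $B_0\smallsetminus S_P\to\mathcal A\smallsetminus\mathcal D$ is distance decreasing for the Kobayashi metric, up to a constant. Take $\delta_i$ freely homotopic to $\gamma_i$ in $B_0$ and of minimal Kobayashi length in $B_0\smallsetminus S_P$. The image of $\delta_i$ then has length $\le c\,\ell(\delta_i)\le c\,L(\gamma_i,s)$ in the fibre metric, and the period it sweeps out has size $\ll c\,L(\gamma_i,s)+c'$. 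Phung bounds $L(\gamma_i,s)=O(s+1)$ by the linear upper bound recalled in the abstract. The free-homotopy and basepoint change only adds a constant: conjugation by a fixed path from $b_0$ changes the period by a bounded amount, uniformly over the monodromy.

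Counting lattice points of $\mathbb Z^{2n}$ in a ball of radius $O(s+1)$ then gives $O((s+1)^{2n})$ choices per generator, and $O((s+1)^{2nk})$ in total, which is \eqref{eq:main_bound_phung}. I expect the main obstacle to be the uniform comparison between the Kobayashi metric on $\mathcal A_{B_0}\smallsetminus\mathcal D$ and a flat fibrewise metric. This means controlling how distance along a section translates into growth of the lifted period even though the base moves. I would handle it by working on a slightly larger relatively compact $B_0'\supset\overline{B_0}$ avoiding $\Sing(f)$. There I would trivialize $\Lambda$ locally and use compactness to obtain uniform constants, and only then apply the length bound $L(\gamma_i,s)=O(s)$.
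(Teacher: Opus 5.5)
Your architecture (Parshin's crossed homomorphism, injectivity modulo trace and torsion, a lower bound for the Kobayashi metric of $(\mathcal A\smallsetminus\mathcal D)|_{B_0}$ by a hermitian metric, lattice counting) matches the paper's. There is a genuine gap, however, in the step that turns a Kobayashi length bound into a bound on the cocycle \emph{on a fixed basis}.

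You pick, separately for each $i$, a loop $\delta_i$ of minimal $\kappa_{S_P}$-length among loops freely homotopic to $\gamma_i$ in $B_0$. A free homotopy class only records a conjugacy class. Joining $b_0$ to the base point of $\delta_i$ (which moves with $S_P$) by a path gives a based loop representing $g_i\gamma_ig_i^{-1}$, for some uncontrolled $g_i\in\pi_1(B_0,b_0)$ that differs from one $i$ to the next. Since $c(g\gamma g^{-1})=c(g)+\varphi_g(c(\gamma))-\varphi_{g\gamma g^{-1}}(c(g))$, getting back to $c(\gamma_i)$ involves $c(g_i)$, which is not bounded uniformly in $P$. The claim that this changes the period ``by a bounded amount'' is therefore unjustified. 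Moreover, the tuple $(c(g_i\gamma_ig_i^{-1}))_i$ does not determine $[c]$: the elements $g_i\gamma_ig_i^{-1}$ need not even generate $G$. Recording the $g_i$ as well would a priori cost exponentially many choices in $s$. What is needed is what the paper takes from Phung's Theorem~5.1, rather than from the bound on $L(\alpha,s)$ alone: loops $\gamma_j\subset B_0\smallsetminus S$ with a \emph{common} base point $b$, a \emph{single} bounded-length path $c_{b_0,b}\in\mathcal C_{b_0}$ with $c_{b_0,b}^{-1}\gamma_jc_{b_0,b}=\alpha_j$ exactly, and $\ell_S(\gamma_j)\le L(s+1)$.

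The second gap is the passage from $\ell_h(\sigma_P(\delta_i))$ to the size of the lattice element. Local trivializations of $\Lambda$ give uniform constants chart by chart. But the lattice element lives in $\Lambda_{b_0}$ and is obtained by transport along the whole base loop. In $\mathbb R^{2n}\times\mathbb D$, the fibre metric over $w\cdot\tilde y_0$, written in the fixed coordinates, is pulled back by $\varphi_w$, which can grow exponentially in the word length of $w$. Concretely, go out along $\sigma_O(w)$, run once around a fibre loop $\lambda$, and return. This loop has $h$-length $O(|w|)+\ell_h(\lambda)$ and represents $\varphi_w^{\pm1}(\lambda)\in\Gamma$. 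So $h$-length controls $\beta$ polynomially only if the base loop stays in a fixed compact region of the universal cover. For minimal geodesics you know only $\ell_\rho(\delta_i)\lesssim\ell_{S_P}(\delta_i)=O(s)$, which a priori gives only $|\beta|\le e^{O(s)}$. The paper's Step~5 uses that Phung's loops lie in a fixed compact $\mathcal K_1\Subset B_0$ with $\ell_\rho(\gamma_j)\le L_1$ independent of $s$. Hence all lifts stay in $\mathbb R^{2n}\times\Omega$ with $\Omega$ compact, where $\mathrm{pr}_1$ is uniformly Lipschitz by $\Gamma$-invariance.

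Finally, hypothesis (iii) concerns the generic fibre only. $\mathcal D_b$ can fail to be hyperbolic on an at most countable but possibly nonempty set $V$, so uniform hyperbolic embeddedness over a neighbourhood of $\overline{B_0}$ does not follow. One must instead \emph{shrink} $B_0$, enlarging the disks to absorb $V\cup\Sing(f)$; this keeps $\pi_1$ and only enlarges $I(s,B_0)$. Only then can one prove $\kappa_{(\mathcal A\smallsetminus\mathcal D)|_{B_0}}\ge c\,|\cdot|_h$, as the paper does via Brody's lemma and Hurwitz's theorem.
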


\noindent We point out that \cite[Theorem~A]{Phung} is stated under the additional assumption that $D$ is ample; ampleness, however, is used in \cite{Phung} only for the height estimate of \cite[Corollary~1.3]{Phung} and plays no role in the proof of Theorem~\ref{thm:main_phung}, which we recall in full in \S\ref{sec:phung_proof}.

\noindent Note that the bound behaves coherently: if the number of removed disks $t$ increases,  then $\rk(\pi_1(B_0,b_0))$ is larger; on the other hand $B_0$ shrinks, so  the set of generalized integral points grows as well.

The proof of Theorem~\ref{thm:main_phung} rests on a Parshin cocycle
argument.  Let $k:=\rk(\pi_1(B_0,b_0))$ and fix generators
$\alpha_1,\ldots,\alpha_k$.  By Ehresmann's theorem, the fibration
$\mathcal A_{B_0}\to B_0$ gives rise to a short exact sequence
$1\to\Gamma\to\pi_1(\mathcal A_{B_0},w_0)\to G\to 1$, where
$\Gamma=H_1(A_{b_0},\mathbb Z)\cong\mathbb Z^{2n}$ and
$G=\pi_1(B_0,b_0)$.  Each section $\sigma_P$ induces a splitting of this
sequence, and the difference between the splittings of~$\sigma_P$
and~$\sigma_O$ defines a $1$-cocycle $c_P\colon G\to\Gamma$ whose
cohomology class determines~$P$ modulo the trace and torsion.  To count
the possible cocycle classes, one bounds the lattice element
$c_P(\alpha_j)\in\Gamma\cong\mathbb Z^{2n}$ for each generator by
the displacement in the universal cover, which in turn is controlled by the
$h$-length of the loop $\sigma_P(\gamma_j)$ in~$\mathcal A_{B_0}$.  A
theorem of Green provides the comparison
$\ell_h(\sigma_P(\gamma_j))\le c^{-1}\ell_S(\gamma_j)$, so the problem
reduces to bounding the Kobayashi length of loops~$\gamma_j$ in
$B_0\smallsetminus S$ representing the generators~$\alpha_j$.  Phung's
linear bound $\ell_S(\gamma_j)=O(s)$ then yields the displacement bound
$H(s)=O(s)$, and lattice counting in $\Gamma\cong\mathbb Z^{2n}$ gives at
most $O(H(s)^{2n})=O(s^{2n})$ possibilities per generator, hence the
exponent $2nk$ in~\eqref{eq:main_bound_phung}.

When $D$ is moreover ample, Theorem~\ref{thm:main_phung} implies that for
every $\sigma_P\in I(s,B_0)$ there exists a constant
$M=M(\mathcal A,\mathcal D,B_0,s)>0$ such that
$\deg_B\sigma_P^*\mathcal D<M$ (see \cite[Corollary~1.3]{Phung}).  This is a
weak form of the geometric Lang--Vojta conjecture for generalized integral
points, weak in two compatible senses. On the one hand it holds for the
\emph{larger} set $I(s,B_0)$ of generalized integral points, rather than for the
integral points attached to a single fixed $S$. On the other hand the conclusion
is correspondingly weaker: Conjecture~\ref{conj:lang-vojta} demands a bound that
is \emph{linear} in $\#S$ with slope $m$ \emph{independent of $s$}, whereas
$M=M(s)$ is only known to be finite for each fixed $s$, with a priori no control
on its growth as $s\to\infty$ --- the issue is thus the absence of a uniform
linear rate, not the mere presence of $s$. We also stress that
\cite[Corollary~1.3]{Phung} assumes $D$ \emph{ample}, which is strictly stronger
than the log general type ($D$ big) hypothesis of
Conjecture~\ref{conj:lang-vojta}; as recalled above, ampleness enters only in this
corollary and plays no role in Theorem~\ref{thm:main_phung}.

The exponent $2n\,\rk(\pi_1(B_0))$ in~\eqref{eq:main_bound_phung} can be
compared with known results for the smaller set
\[
J(s):=\{\sigma_P\in A(K)\colon
\#\left(f(\sigma_P(B)\cap\mathcal D)\right)\le s\}
\;\subseteq\; I(s,B_0)\,.
\]
When $n=1$ (elliptic curves), classical height-theoretic bounds of
Hindry--Silverman~\cite[Corollary~8.5]{HS} together with Shioda's
bound~\cite[Theorem~2.5]{Shioda} for the Mordell--Weil rank show that
$\#J(s)$ is bounded by a polynomial in~$s$ of degree at most
$\rk(\pi_1(B_0))$.  This is half the exponent appearing in Phung's bound
for the much larger set~$I(s,B_0)$. 

The main result of this paper eliminates this discrepancy:

\begin{thm}\label{thm:main}
Assume that the setting \texttt{(P)} holds.  Then for every
$\varepsilon>0$ there exists $m:=m(\mathcal A,\mathcal D,B_0,\varepsilon)\in\mathbb R_{>0}$ such
that
\begin{equation}\label{eq:main_bound}
\#\left(\faktor{I(s,B_0)}{\Tr_{K/\mathbb C}(A)(\mathbb C)}\right)
\le m(s+1)^{n\,\rk(\pi_1(B_0,b_0))+\varepsilon}\,.
\end{equation}
\end{thm}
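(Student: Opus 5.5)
The plan is to rerun Phung's Parshin cocycle argument (recalled in the discussion after Theorem~\ref{thm:main_phung}) with his linear Kobayashi length bound replaced by the sharper estimate announced in the abstract. For every nontrivial loop $\alpha$ in $B_0$ we have $L(\alpha,s)=O(\sqrt{s\log s})$. Everything else in the chain of reductions stays as it is:
\[
\text{section } \sigma_P \;\to\; \text{cocycle } c_P \;\to\; \text{displacement of } c_P(\alpha_j) \;\to\; \text{$h$-length of } \sigma_P(\gamma_j) \;\to\; \text{Kobayashi length of } \gamma_j \subset B_0\smallsetminus S .
\]

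\textbf{Main step: the length bound.} Fix generators $\alpha_1,\dots,\alpha_k$ of $\pi_1(B_0,b_0)$ with $k=\rk(\pi_1(B_0,b_0))$. The key input is that for each $j$ and each finite $S\subset B_0$ with $\#S\le s$ there is a loop $\gamma_j\subset B_0\smallsetminus S$, freely homotopic to $\alpha_j$ in $B_0$, with
\[
\ell_S(\gamma_j)\le C_j\sqrt{(s+1)\log(s+2)}.
\]
This is the step I expect to be hardest. The approach I would try is an averaging argument.
\begin{itemize}
\item Take a one-parameter family of disjoint parallel representatives $\gamma_{j,u}$, $u\in[0,1]$, of $\alpha_j$ inside a fixed annular neighbourhood in $B_0$.
\item Bound the Kobayashi metric of $B_0\smallsetminus S$ pointwise. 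Near a puncture $p$ it behaves like $|dz|/(|z|\log(1/|z|))$. Away from $S$ it is controlled by $\sum_{p\in S}$ of such local terms, plus the Kobayashi metric of $B_0$.
\item Integrate over $u$, using the local metric-ball integral estimates (Lemma~\ref{lem:main_lem}). Since a cusp term has only logarithmically divergent behaviour, one can choose $u$ so that the loop passes at distance roughly $1/s$ from the punctures. Cutting the neighbourhood into about $\sqrt{s}$ strips and choosing a good strip and a good curve inside it should trade the $s$ contributions against one another and give $\sqrt{s\log s}$ rather than $s$.
\end{itemize}
Since this length bound is presumably what the paper proves in \S\ref{sec:growth}, I will take it as the input.

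\textbf{Displacement.} With $S:=f(\sigma_P(B_0)\cap\mathcal D)$, the map $\sigma_P$ sends $B_0\smallsetminus S$ into $\mathcal A_{B_0}\smallsetminus\mathcal D$. Green's comparison $\ell_h(\sigma_P(\gamma_j))\le c^{-1}\ell_S(\gamma_j)$ uses hypothesis (iii) and a constant $c$ independent of $P$ and $S$. It gives
\[
\ell_h(\sigma_P(\gamma_j))=O\bigl(\sqrt{s\log s}\bigr).
\]
Lifting to the universal cover, the lattice vector $c_P(\alpha_j)\in\Gamma\cong\mathbb Z^{2n}$ has norm at most $H(s)=O(\sqrt{s\log s})$. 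One must check that the free-homotopy ambiguity and the change of basepoint (conjugation, plus a bounded path from $b_0$ to $\gamma_j$) cost only an additive $O(1)$ and a fixed linear transformation of $\Gamma$. This holds exactly as in \S\ref{sec:phung_proof}.

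\textbf{Counting.} The number of lattice points of norm at most $H$ in $\mathbb Z^{2n}$ is $O(H^{2n})=O((s\log s)^{n})$. A cocycle is determined by its values on the $k$ generators, so the number of cocycles is
\[
O\bigl((s\log s)^{nk}\bigr)\le m(\varepsilon)(s+1)^{nk+\varepsilon},
\]
absorbing $(\log s)^{nk}$ into $s^{\varepsilon}$. Finally, $c_P$ determines $P$ modulo $\Tr_{K/\mathbb C}(A)(\mathbb C)$ up to a bounded finite torsion ambiguity, as in Phung's proof. This yields \eqref{eq:main_bound}.
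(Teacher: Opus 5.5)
Your scheme matches the paper's, and taking \Cref{thm:new_bound_L} as input is legitimate. Your heuristic sketch of that bound is not the paper's argument, but this does not matter here. The gap is the claim that the change of base point ``holds exactly as in \S\ref{sec:phung_proof}''.

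\textbf{Why the base points matter.} In \S\ref{sec:phung_proof} the loops $\gamma_1,\dots,\gamma_k$ share \emph{one} base point $b$ and are conjugated to the $\alpha_j$ by \emph{one} path $c_{b_0,b}$. Hence the correction loop $\omega$, comparing $v_b\circ\sigma_O(c_{b_0,b})$ with $\sigma_P(c_{b_0,b})\circ\eta_{w_0,\sigma_P(b_0)}$, is the same for all $j$. Then $\alpha_j\mapsto[\omega]^{-1}i_P(\alpha_j)[\omega]$ has cocycle cohomologous to $c_P$.

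If you apply the length bound to each generator separately, you get loops at different points $q_j$ with different conjugating paths $\sigma_j$. The corrections then depend on $j$: the lattice vector you actually bound is $\beta_j'=c_P(\alpha_j)+(\varphi_{\alpha_j}-1)[\omega_j]$. Here $[\omega_j]-[\omega_1]$ is represented by a loop containing $\sigma_P$ of the base path $\sigma_j\circ\sigma_1^{-1}$ from $q_1$ to $q_j$. That path has bounded $\rho$-length but may pass arbitrarily close to (or through) $S$, so nothing bounds its $h$-length uniformly in $P$.

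Consequently the bounded tuple $(\beta_j')_j$ pins down $(c_P(\alpha_j))_j$ only modulo $\prod_j\mathrm{Im}(\varphi_{\alpha_j}-1)$. Passing to cohomology only quotients by the diagonal coboundaries $\{((\varphi_{\alpha_j}-1)y)_j\colon y\in\Gamma\}$, which have rank at most $2n$. When $\sum_j\rk(\varphi_{\alpha_j}-1)>2n$, infinitely many classes are compatible with each bounded tuple and the count breaks down. This happens e.g.\ for $n=1$ as soon as three basis loops have nontrivial monodromy. The extra $O(1)$ of $h$-length you mention is correct but beside the point.

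Your free-homotopy ambiguity has the same defect. If only $[c_j^{-1}\gamma_jc_j]=g_j\alpha_jg_j^{-1}$, then $c_P(g_j\alpha_jg_j^{-1})=\varphi_{g_j}\bigl(c_P(\alpha_j)+(1-\varphi_{\alpha_j})\varphi_{g_j}^{-1}c_P(g_j)\bigr)$. This involves the unbounded $c_P(g_j)$, so it is not a fixed linear transformation of $c_P(\alpha_j)$.

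\textbf{The missing ingredient} is Proposition~\ref{prop:common-base}.
\begin{enumerate}
\item By Lemma~\ref{lem:prescribed-tangent}, choose smooth simple representatives $\gamma^{(j)}$ of the $\alpha_j$, based at $b_0$, homotopic to $\alpha_j$ relative to $b_0$, and all with the same unit tangent $w$ at $b_0$.
\item Then every parallel curve $\gamma^{(j)}_u$ starts at the common point $q(u)=\exp_{b_0}(u\eta_0)$. The single normal segment $\sigma$ from $b_0$ to $q(u)$ gives $[\sigma^{-1}\circ\gamma^{(j)}_u\circ\sigma]=\alpha_j$ exactly, not merely up to free homotopy.
\item Run the mean-value argument on $g=\sum_j g_j$ rather than on each $g_j$ separately. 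This yields one $u_0\notin\bigcup_jE_j$ that is good for all $k$ strips at once, at the cost of a factor $k$ in the constant.
\end{enumerate}
With one base point and one conjugating path, Steps~3--6 of \S\ref{sec:phung_proof} apply verbatim. This includes the compactness input of Step~5, since everything lies in $\bigcup_jT_\delta(\gamma^{(j)})$. Your lattice count and the absorption of $(\log s)^{nk}$ into $s^{\varepsilon}$ then finish the proof.
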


The key ingredient behind \Cref{thm:main} is a new, essentially optimal,
upper bound for the Kobayashi length of loops on punctured bordered
surfaces (\Cref{thm:new_bound_L}).  Let us briefly describe the context.
Let $B_0$ be the bordered hyperbolic surface introduced above.  For a
finite set $S\subset B_0$, the punctured surface $B_0\smallsetminus S$
carries its own Kobayashi metric, and the induced length~$\ell_S$ of paths
in $B_0\smallsetminus S$ is larger than the length of the same paths
measured in the Kobayashi metric of~$B_0$: the new cusps created by the
punctures stretch the metric, so loops become longer as the cardinality
of~$S$ grows.  Given a loop~$\alpha$ representing a nontrivial class in
$\pi_1(B_0,b_0)$, it is natural to ask how the minimal Kobayashi length
of a representative of~$\alpha$ in $B_0\smallsetminus S$ grows with the
number of punctures.  This is captured by the quantity
\[
L(\alpha,s):=\sup_{\substack{S\subset B_0\\\# S\le s}}
\inf\left\{\ell_{S}(\gamma)\colon
\gamma\subset B_0\smallsetminus S\text{ is a loop freely homotopic to
$\alpha$ in }B_0\right\}\,,
\]
which measures the worst-case minimal Kobayashi length over all
configurations of at most~$s$ punctures.
Phung~\cite[Theorems~B,~C]{Phung} proved that $L(\alpha,s)$ grows at least
as $c\sqrt{s}/\log(s+2)$ and at most linearly in~$s$.  As explained above,
the linear upper bound $L(\alpha,s)=O(s)$ is what produces the
factor of~$2$ in the exponent of~\eqref{eq:main_bound_phung}: replacing
$H(s)=O(s)$ by $H(s)=O(\sqrt{s\log s})$ in the lattice counting reduces
the exponent from $2nk$ to $nk+\varepsilon$, for every $\varepsilon>0$ (the
logarithmic factor being absorbed into the~$\varepsilon$).  We prove:

\begin{thm}\label{thm:new_bound_L}
Fix a loop $\alpha\subset B_0$ representing a nontrivial class in
$\pi_1(B_0,b_0)$.  There exists $C>0$, depending only on $\alpha$, $B_0$,
and~$\rho$, such that
\begin{equation}\label{eq:new_bound_L}
L(\alpha,s)\le C\sqrt{(s+1)\log(s+2)}\quad\text{for all }s\ge 0\,.
\end{equation}
\end{thm}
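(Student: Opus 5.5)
Fix a smooth representative $\alpha_0$ of $\alpha$ in $B_0$ and a thin closed collar (annulus) $U\subset B_0$ around it, say $U\cong \alpha_0\times[-\delta,\delta]$ of $\rho$-width $2\delta$, with $\delta$ small enough that $U$ avoids the boundary disks. Every curve $\alpha_\tau:=\alpha_0\times\{\tau\}$ is freely homotopic to $\alpha$ in $B_0$, so it suffices to find, for any $S$ with $\#S\le s$, one $\tau$ and a small modification of $\alpha_\tau$ avoiding $S$ whose Kobayashi length in $B_0\smallsetminus S$ is $O(\sqrt{s\log s})$. The tool is the local integral estimate the paper develops in \S\ref{sec:growth} (Lemma~\ref{lem:main_lem}). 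This estimate bounds the Kobayashi density of $B_0\smallsetminus S$ near a point $b$ by $\lambda_S(b)\lesssim \sum_{p\in S\cap\Delta_\rho(b,r_0)} \frac{1}{d_\rho(b,p)\,|\log d_\rho(b,p)|}$, plus a bounded term. We obtain it by comparison with the punctured disk $\Delta^*$, whose metric is $\frac{|dz|}{|z|\log(1/|z|)}$, via the distance-decreasing property under the inclusion of a small disk minus the points of $S$ (the multi-puncture case handled by taking a suitable minimum or sum).

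The averaging step is as follows. Rather than bounding the length of a single curve, average $\ell_S(\alpha_\tau)$ over $\tau$ in a subinterval of length $h$, and also split the curve into pieces. A point $p$ at distance $u$ from $\alpha_\tau$ contributes to the length of $\alpha_\tau$ roughly $\int \frac{dx}{\sqrt{x^2+u^2}\,\log(1/\sqrt{x^2+u^2})}\approx \log\log(1/u)$, which is small but unbounded as $u\to 0$. The better route is a ``choose the level, then detour'' argument. Partition $[-\delta,\delta]$ into $N\asymp s$ strips. By pigeonhole, some interval of width $\asymp 1/s$ in $\tau$ contains no point of $S$ in the corresponding sub-annulus. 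On that empty strip, the Kobayashi density at the center line is controlled by the nearest punctures, which lie at distance $\ge c/s$. The resulting sum $\sum_p 1/(d\log(1/d))$ along the curve would be too crude, giving $O(s)$, which is Phung's bound.

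To reach $\sqrt{s\log s}$, optimize with a curve made of $m$ short pieces. Cover the annulus by a grid of $\sqrt{s}\times\sqrt{s}$ cells of size $\asymp 1/\sqrt{s}$; on average a cell contains $O(1)$ points. Build a path along cell boundaries shifted randomly in each column. Its length in each cell is $O(1/\sqrt{s})$ in $\rho$, and its expected Kobayashi cost per cell is at most $\int \lambda_S$ along a random segment. Computing with a random offset $u$ uniform in $[0,1/\sqrt{s}]$, a point at distance $u$ costs $\int_{|x|\lesssim 1/\sqrt s}\frac{dx}{(|x|+u)\log(1/(|x|+u))}$, whose expectation in $u$ is $O\bigl(\frac{1}{\log s}\cdot\log s\bigr)$-type, more precisely $O(\sqrt{\log s})$ after combining with the weight. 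Summing over $\sqrt{s}$ columns then gives $O(\sqrt{s\log s})$, and a path realizing at most the expectation exists. The endpoints in adjacent columns are joined by vertical segments of length $O(1/\sqrt s)$ chosen similarly, so the homotopy class is preserved because everything stays inside $U$.

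The main obstacle is getting the correct density estimate with the $\log$ factor and the correct balance: the trivial bound $\lambda\le 1/(d\log(1/d))$ integrated naively gives either $\sqrt{s}\log s$ or $s$. I would first try to prove the pointwise bound $\lambda_S(b)\le C\bigl(1+\min_{p}\frac{1}{d(b,p)\log(1/d(b,p))}\bigr)\cdot(\text{local count})^{1/2}$, or use an $L^2$-type bound $\int_{\text{cell}}\lambda_S^2\,dA\lesssim \log s$ on cells of area $1/s$. Then Cauchy--Schwarz and Fubini over the random offsets give a per-cell cost $\lesssim \sqrt{s^{-1}\cdot s\log s}/\sqrt{s}\cdot\ldots$, which ultimately yields total length $\lesssim \sqrt{s}\cdot\sqrt{\log s}$. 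Establishing this area estimate, presumably the content of Lemma~\ref{lem:main_lem}, uniformly in the configuration $S$ is the crux. The rest, namely compactness to get constants depending only on $\alpha,B_0,\rho$, and treating small $s$ trivially via the $s+1$, $s+2$ normalization, is routine.
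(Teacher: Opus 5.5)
Your basic instinct, picking a good curve from a one-parameter family of parallel curves in a collar of $\alpha$ by averaging, matches the paper. However, the proposal never establishes a sublinear bound, and the ingredients it states are false or left unproved.

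\emph{The pointwise estimate fails.} The bound you call ``the tool'', $\lambda_S(b)\lesssim\sum_{p}\bigl(d_\rho(b,p)\,|\log d_\rho(b,p)|\bigr)^{-1}+O(1)$, fails for clustered punctures. The same holds for your fallback with $\min_p$ times the square root of a local count. Suppose two points of $S$ are at mutual distance $d$ and at distance $d$ from $b$. After rescaling by $1/d$ the picture converges to $\mathbb C\smallsetminus\{0,1\}$, so $\lambda_S(b)$ is of order $1/d$, while your right-hand side is $O\bigl(1/(d\log(1/d))\bigr)$. The distance-decreasing property only lets you insert a once-punctured disk around the nearest puncture with radius up to the distance to the next puncture. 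So the logarithmic gain disappears exactly when punctures cluster.

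\emph{The per-cell and averaging steps fail or are missing.} The bound $\int_{\mathrm{cell}}\lambda_S^2\,dA_\rho\lesssim\log s$ cannot hold uniformly in $S$: an arbitrary $S$ may put all its points in one cell, each contributing a cusp of hyperbolic area bounded below. For the same reason, ``on average a cell contains $O(1)$ points'' is not available for a worst-case $S$. The random-offset computation is not carried out; it jumps from ``$O\bigl(\tfrac{1}{\log s}\cdot\log s\bigr)$-type'' to $O(\sqrt{\log s})$ with no argument. You also explicitly leave the needed area estimate as ``the crux'' and attribute it to Lemma~\ref{lem:main_lem}. That lemma is an $L^p$ bound for $p<2$ over the whole strip, not an $L^2$ per-cell bound.

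\emph{Non-simple classes are not handled.} An embedded collar $U\cong\alpha_0\times[-\delta,\delta]$ requires a simple representative, which does not exist for, e.g., $\alpha=\alpha_1^2$. The paper treats general $\alpha$ separately in \S\ref{sec:reduction-to-simple}. It uses simple basis loops with a common base point and tangent direction (Lemma~\ref{lem:prescribed-tangent} and Proposition~\ref{prop:common-base}), then concatenates along the word of $\alpha$. You would need that, or an immersed strip of bounded multiplicity.

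The missing idea is that neither a log-refined pointwise bound nor a grid is needed. The paper uses only the crude estimate $\lambda_S(b)\le c_1/\min\{d_\rho(b,S),D\}$. This holds for every configuration, by comparison with the metric ball of radius $d_\rho(b,S)$. Its singularity is $L^p$-integrable for $p<2$, and a Voronoi decomposition of the Fermi strip gives $\int_{T_\delta(\gamma)}\lambda_S^p\,dA_\rho\le A(s+1)/(2-p)$, with each puncture contributing $O(1/(2-p))$. Fubini in Fermi coordinates then produces $u_0$ with $\gamma_{u_0}\cap S=\emptyset$ whose $L^p$-mass of $\lambda_S$ is at most the average. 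H\"older along $\gamma_{u_0}$ gives $\ell_S(\gamma_{u_0})\lesssim\bigl((s+1)/(2-p)\bigr)^{1/p}$, and the choice $p=2-1/\log(s+2)$ yields the theorem.

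Incidentally, averaging over the whole strip already controls the $1/d$ singularity at $p=1$. On the near part of a Voronoi cell of area $a_j$, the layer-cake formula gives $\int d_\rho(\cdot,p_j)^{-1}\,dA_\rho\le 2\sqrt{c_2\,a_j}$. Cauchy--Schwarz on $\sum_j\sqrt{a_j}$, with the far parts contributing $O(1)$ in total, then gives $\int_{T_\delta(\gamma)}\lambda_S\,dA_\rho=O(\sqrt{s+1})$. So the obstacle you describe comes from integrating along a single curve rather than over the strip.
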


\noindent Combined with Phung's lower bound, this determines the exact
growth rate (Corollary~\ref{cor:asympt_impr}):
\begin{equation}\label{eq:exact_growth}
\lim_{s\to+\infty}\frac{\log L(\alpha,s)}{\log s}=\frac{1}{2}\,.
\end{equation}
This answers \cite[Question~1.4]{Phung} which asked about the asymptotic
behavior of $\frac{\log L(\alpha,s)}{\log s}$.

The proof of \Cref{thm:new_bound_L} proceeds in two steps. First, we
establish the bound under the assumption that $\alpha$ admits a smooth
\emph{simple} representative $\gamma\Subset B_0$. We consider the Fermi
strip $T_\delta(\gamma)$, a thin tubular neighborhood of~$\gamma$ foliated
by parallel curves~$\gamma_u$ (the simplicity of $\gamma$ ensures that the
exponential map from the normal bundle is a diffeomorphism for $\delta$
small enough). Phung's linear upper bound $L(\alpha,s)=O(s)$
in~\cite{Phung} is obtained by estimating the Kobayashi length of a single
loop directly; our improvement replaces this pointwise approach with an
integral averaging argument.  The key idea is to bound the $L^p$-norm of
the distortion function $\lambda_S$ (the supremal directional ratio of the
Kobayashi--Royden metric of $B_0\setminus S$ to the background metric~$\rho$) over
the entire strip, for a variable exponent $1<p<2$.  This is achieved by a
Voronoi decomposition of the strip into cells centered at the punctures:
near each puncture, $\lambda_S$ blows up like the inverse of the distance,
and the $L^p$-integrability for $p<2$ (but not for $p=2$) controls the
singularity.  The crucial gain over the linear bound is that each of the
$s$ punctures contributes only $O(1/(2-p))$ to the $L^p$-mass of
$\lambda_S$ on the strip, so the total mass is $O(s/(2-p))$; H\"older's
inequality then converts this $L^p$-bound into an upper bound of order
$(s/(2-p))^{1/p}$---sublinear in~$s$---on the Kobayashi length of a
typical parallel curve~$\gamma_{u_0}$, which is freely homotopic
to~$\alpha$ in~$B_0$ and avoids~$S$.  Optimising the exponent as $p=2-1/\log(s+2)$ balances the
divergence of the $L^p$-norm as $p\to 2^-$ against the sharpness of the
H\"older estimate, yielding the $O(\sqrt{(s+1)\log(s+2)})$ bound.

In the second step, the general case is reduced to the simple case via a
word decomposition argument: $B_0$ has the homotopy type of a
compact orientable surface of genus $g$ with $t$ boundary circles, so
$\pi_1(B_0,b_0)$ is a free group admitting a basis
$\alpha_1,\dots,\alpha_k$ of \emph{smooth simple loops} (the generators
around the disks, together with the standard genus generators when
$g\ge 1$); any nontrivial $\alpha\in\pi_1(B_0,b_0)$ can be written as a
word $\alpha_{i_1}^{\epsilon_1}\cdots \alpha_{i_m}^{\epsilon_m}$ in this
basis. Applying the simple case to each basis loop and concatenating the
resulting parallel curves at a common base point (which can be arranged,
see Proposition~\ref{prop:common-base} below) yields a representative
of~$\alpha$ avoiding~$S$ whose Kobayashi length is at most $m$ times the
bound for a single basis loop.

\Cref{thm:main} follows from \Cref{thm:new_bound_L} by substituting the
improved length estimate into the Parshin cocycle argument outlined above.
The cocycle construction requires loops representing the generators
$\alpha_1,\ldots,\alpha_k$ that share a \emph{common base point} and are
conjugated to the generators by a \emph{single} path.
\Cref{thm:new_bound_L}, applied independently to each generator, produces
loops with the optimal Kobayashi length bound but with potentially
different base points.  In Proposition~\ref{prop:common-base} we show that,
by choosing smooth simple representatives of $\alpha_1,\ldots,\alpha_k$ that
share a common tangent direction at~$b_0$ (so that their Fermi strips can
be controlled by a single averaging parameter), a common base point can be
arranged without degrading the asymptotic bound.  The improved displacement
bound $H(s)=O(\sqrt{s\log s})$ then propagates through the lattice
counting argument unchanged, halving the exponent.

\paragraph{Funding.} 
The author is supported by the NSFC RFIS-I 2025, project titled \emph{``Interdisciplinary Perspectives in Diophantine Geometry: Analytic, Dynamical and Arithmetic Explorations''}, and by the Zhejiang Science and Technology Development Fund [2024] No.\ 28, project titled \emph{``New Perspectives in Diophantine Geometry''}.

\medskip
\section{Kobayashi lengths estimates}\label{sec:extremal}

\subsection{Kobayashi pseudo-metric}\label{sec:kob_metric}
Let $X$ be a connected complex manifold and let $TX$ be its tangent bundle. Throughout, $TX$ denotes the \emph{real} tangent bundle $T^{\mathbb R}X$, regarded as a complex vector bundle (of complex rank $\dim_{\mathbb C}X$) via the complex structure $J$, so that $\lambda v:=(\operatorname{Re}\lambda)\,v+(\operatorname{Im}\lambda)\,Jv$ for $\lambda\in\mathbb C$ and $v\in T^{\mathbb R}_{X,x}$; this is canonically $\mathbb C$-isomorphic to the holomorphic tangent bundle $T^{1,0}X$ and we use the two interchangeably. In particular it is \emph{not} the complexification $T^{\mathbb R}X\otimes_{\mathbb R}\mathbb C=T^{1,0}X\oplus T^{0,1}X$, whose complex rank is twice as large. 
\begin{defn}
A \emph{Finsler pseudo-metric} on $X$ is a function
\[
\begin{aligned}
F\colon TX &\to \mathbb R_{\ge 0}\\
(x,v)&\mapsto F(x,v)
\end{aligned}
\]
satisfying the homogeneity condition $F(x,\lambda v)=|\lambda|\,F(x,v)$ for all $\lambda\in\mathbb C$. If $F(x,v)>0$ for all $x\in X$ and $v\in T_{X,x}\setminus\{0\}$, then we say that $F$ is a \emph{Finsler metric}.
\end{defn}

Throughout, by a \emph{path} we mean a piecewise $C^1$ map $\gamma\colon[0,1]\to X$; this entails no loss of generality for the distances defined below. Given a Borel measurable Finsler pseudo-metric $F$ and a path $\gamma$, the function $t\mapsto F(\gamma(t),\gamma'(t))$ is measurable, and we define the length of $\gamma$ as
\[
\ell_F(\gamma):=\int_0^1 F(\gamma(t),\gamma'(t))\,dt\ \in[0,+\infty].
\]
When  $F$ is \emph{upper semicontinuous} we have $\ell_F(\gamma)<+\infty$, since $F$ is then locally bounded above and $\gamma([0,1])$ is compact. Given a set of paths $\Gamma$ we set
\[
\ell_F(\Gamma):=\inf_{\gamma\in\Gamma}\ell_F(\gamma)\,.
\]
We endow $X$ with the structure of extended pseudo-metric space by setting
\[
d_F(x,y):=\inf_\gamma \ell_F(\gamma)\,,\quad \text{where the infimum runs over all paths $\gamma$ joining $x$ and $y$}\,.
\]
Moreover, if $Y,Z$ are two subsets of $X$, we put
\[
\diam_F(Y):=\sup_{y,y'\in Y}d_F(y,y')\,,\quad d_F(Y,Z):=\inf_{y\in Y,\,z\in Z}d_F(y,z)\,.
\]
One can show that a \emph{continuous} Finsler metric induces a metric $d_F$. Any Hermitian metric $h$ on $X$ induces a Finsler metric $F_h(x,v)=\sqrt{h_x(v,v)}$, hence a distance which we denote by $d_h$.

  An important example of a Finsler pseudo-metric is the \emph{Kobayashi--Royden pseudo-metric}
\[
\kappa_X(x,v):=\inf\left\{\frac{2}{R}\in\mathbb R_{>0}\colon \exists\, f\in\mathrm{Hol}(\Delta(0,R), X)\text{ such that } f(0)=x,\ f'(0)=v\right\}\,,
\]
where $\mathrm{Hol}(\Delta(0,R), X)$ is the set of holomorphic maps from the open disk of radius $R$ centered at $0$ to $X$. Royden \cite[Proposition 3]{Roy} shows that $\kappa_X$ is upper semicontinuous, hence Borel measurable. Since $\kappa_X$ is intrinsic to $X$, the induced topological pseudo-metric is denoted by $d_X$ and called the \emph{Kobayashi pseudo-metric}. We say that $X$ is \emph{Kobayashi hyperbolic} if $d_X$ is a metric.

If $X$ carries a Hermitian metric $h$ whose holomorphic sectional curvature is bounded above by a negative constant, then by the Ahlfors--Schwarz lemma one has $\kappa_X\ge c\,\|\cdot\|_h$ for some constant $c>0$; in particular $d_X\ge c\,d_h$ and $X$ is Kobayashi hyperbolic. In dimension $\ge 2$ the converse is a delicate matter and is not known to hold in general. On a Riemann surface, however, it holds: if $X$ is Kobayashi hyperbolic its universal cover is the disk, so $X$ is biholomorphic to $\Gamma\backslash\mathbb H$ for a torsion-free Fuchsian group $\Gamma\cong\pi_1(X)$ (no elliptic elements, parabolics allowed). The curvature $-1$ Poincar\'e metric $ds^2=(dx^2+dy^2)/y^2$ on $\mathbb H$ descends to $X$, and since holomorphic covering maps are infinitesimal isometries for $\kappa_X$, the descended metric \emph{is} the Kobayashi metric. Thus on Riemann surfaces ``admits a Hermitian metric of holomorphic sectional curvature bounded above by a negative constant'' and ``Kobayashi hyperbolic'' coincide.

From the definition one deduces the distance-decreasing property of the Kobayashi--Royden metric: if $\phi\colon X\to Y$ is holomorphic, then
\[
\kappa_Y(\phi(x), d_x\phi(v))\le\kappa_X(x,v)\,,\quad\forall (x,v)\in TX\,.
\]
This in turn implies the distance-decreasing property of the Kobayashi distance
\[
d_Y(\phi(x),\phi(x'))\le d_X(x,x')\,,\quad\forall x,x'\in X\,.
\]
\subsection{Growth of loop lengths in terms of the cusps}\label{sec:growth}
We fix a compact Riemann surface $B$ of genus $g$ endowed with a Riemannian metric $\rho$. Let $\overline\Delta_1,\ldots,\overline{\Delta}_t$ be disjoint disks on $B$ and define $B_0:=B\smallsetminus\sqcup^t_{i=1}\overline\Delta_i$. We assume that $t\ge 1$, so that $B_0$ is hyperbolic. Let $S\subseteq B_0$ be a finite set (possibly empty), put $s:=\#S$, and let $\kappa_S$ be the Kobayashi--Royden metric of the hyperbolic surface $B_0\smallsetminus S$, with induced length and distance $\ell_S$, $d_S$. We define the \emph{distortion function}
\begin{equation}\label{eq:dist_fun}
\lambda_S(b):=\sup_{v\in T_bB\smallsetminus\{0\}}\frac{\kappa_S(b,v)}{\sqrt{\rho_b(v,v)}}\,,\qquad b\in B_0\smallsetminus S\,,
\end{equation}
where $T_bB=T^{\mathbb R}_bB$ is the \emph{real} tangent plane (real dimension $2$), on which both $\kappa_S(b,\cdot)$ and $\sqrt{\rho_b(\cdot,\cdot)}$ are defined. The ratio is homogeneous of degree $0$, so the supremum may be taken over the compact $\rho$-unit circle, where it is the upper semicontinuous function $\kappa_S(b,\cdot)$; thus $\lambda_S(b)$ is finite and attained, and $\lambda_S$ is Borel on $B_0\smallsetminus S$. In a holomorphic coordinate $z=x+iy$, identifying $a\partial_x+b'\partial_y\leftrightarrow(a+ib')\partial_z$, $\mathbb C$-homogeneity gives $\kappa_S(b,a\partial_x+b'\partial_y)=\kappa_S(b,\partial_z)\sqrt{a^2+b'^2}$, while $\rho_b$ has eigenvalues $\mu_1(b)\le\mu_2(b)$ in the frame $(\partial_x,\partial_y)$. Maximizing the ratio over the Euclidean unit circle thus minimizes $\rho_b(v,v)$, giving
\[
\lambda_S(b)=\frac{\kappa_S(b,\partial_z)}{\sqrt{\mu_1(b)}}\,.
\]
If $\rho$ is conformal ($\mu_1=\mu_2$, or equivalently $\rho$ is hermitian) the ratio  of \Cref{eq:dist_fun} is independent of $v$ (so that the supremum on $v$ is superfluous) and $\lambda_S$ is smooth; we do not assume this.

\smallskip
\noindent\textbf{Standing assumption for \S\ref{sec:growth} (lifted in \S\ref{sec:reduction-to-simple}).} Throughout this subsection, we fix a loop $\alpha$ in $B_0$ that doesn't represent a trivial class in $\pi_1(B_0,b_0)$ \emph{and that admits a smooth simple representative} $\gamma\Subset B_0$ freely homotopic to $\alpha$; here and throughout, a \emph{smooth loop} is a smooth map from $S^1$, so in particular $\gamma'(0)=\gamma'(\ell_\rho(\gamma))$ (such loops exist abundantly: for instance, the basis loops of $\pi_1(B_0,b_0)$ described in \S\ref{sec:reduction-to-simple} are smooth simple). Existence of a smooth representative for any free homotopy class is given by \cite[Theorem 6.26]{Lee}; the simplicity is part of our standing assumption here, and the general case (no simplicity hypothesis) is reduced to this one in \S\ref{sec:reduction-to-simple}.

\smallskip
\noindent Parametrize $\gamma$ by its $\rho$-arc-length and let $\eta(\tau)$ be the unit normal vector of $\gamma$ at $\tau$; here and throughout we fix the convention that $\eta(\tau)$ is the rotation of the unit tangent vector $\gamma'(\tau)$ by $+\pi/2$, with respect to $\rho$ and the orientation of the Riemann surface~$B$. Since $\gamma$ is smooth and simple and compactly contained in~$B_0$, for $\delta\in\mathbb R_{>0}$ sufficiently small the exponential map from the normal bundle is a diffeomorphism, yielding:
\[
\begin{aligned}
\mathbb R/\ell_\rho(\gamma)\mathbb Z\times[-\delta,
\delta]&\to  \{b\in B_0\colon d_\rho(b,\gamma)\le\delta\}=:T_\delta(\gamma)\Subset B_0\\
(\tau,u) &\mapsto\exp_{\gamma(\tau)}(u\eta(\tau))
\end{aligned}
\]
The coordinates $(\tau,u)$ are called \emph{Fermi  coordinates}, and $T_{\delta}(\gamma)$ is the \emph{Fermi strip} with respect to  $\gamma$. For any $u\in[-\delta,\delta]$ we define the \emph{parallel curve} $\gamma_u\colon[0,\ell_\rho(\gamma)]\to B_0$, $\ \gamma_u(\tau):=\exp_{\gamma(\tau)}(u\eta(\tau))$.

\begin{figure}[ht]
\centering
\begin{tikzpicture}[xscale=1.8, yscale=1.2]
\fill[blue!6]
  plot[smooth, domain=0:5, samples=50] (\x, {0.3*sin(\x*30)+0.7})
  -- plot[smooth, domain=5:0, samples=50] (\x, {0.3*sin(\x*30)-0.7})
  -- cycle;
\draw[thick, blue!40]
  plot[smooth, domain=0:5, samples=50] (\x, {0.3*sin(\x*30)+0.7});
\draw[thick, blue!40]
  plot[smooth, domain=0:5, samples=50] (\x, {0.3*sin(\x*30)-0.7});
\draw[very thick, black,
  decoration={markings, mark=at position 0.55 with {\arrow{Stealth[length=6pt]}}},
  postaction={decorate}]
  plot[smooth, domain=0:5, samples=50] (\x, {0.3*sin(\x*30)});
\node[black, right] at (4.95, {0.3*sin(150)}) {$\gamma$};
\draw[thick, dashed, red!70!black,
  decoration={markings, mark=at position 0.55 with {\arrow{Stealth[length=5pt]}}},
  postaction={decorate}]
  plot[smooth, domain=0:5, samples=50] (\x, {0.3*sin(\x*30)+0.25});
\node[red!70!black, right] at (4.95, {0.3*sin(150)+0.33}) {$\gamma_u$};
\node[cross out, draw, inner sep=1.5pt] at (0.8, {0.3*sin(24)+0.15}) {};
\node[above right, font=\scriptsize] at (0.8, {0.3*sin(24)+0.15}) {$p_1$};

\node[cross out, draw, inner sep=1.5pt] at (1.8, {0.3*sin(54)-0.35}) {};
\node[below right, font=\scriptsize] at (1.8, {0.3*sin(54)-0.35}) {$p_2$};

\node[cross out, draw, inner sep=1.5pt] at (2.7, {0.3*sin(81)+0.4}) {};
\node[above right, font=\scriptsize] at (2.7, {0.3*sin(81)+0.4}) {$p_3$};

\node[cross out, draw, inner sep=1.5pt] at (3.5, {0.3*sin(105)-0.15}) {};
\node[below right, font=\scriptsize] at (3.5, {0.3*sin(105)-0.15}) {$p_4$};

\node[cross out, draw, inner sep=1.5pt] at (4.4, {0.3*sin(132)+0.1}) {};
\node[above right, font=\scriptsize] at (4.4, {0.3*sin(132)+0.1}) {$p_5$};
\pgfmathsetmacro{\tp}{1.5}
\pgfmathsetmacro{\yb}{0.3*sin(\tp*30)}
\pgfmathsetmacro{\dy}{0.1571*cos(\tp*30)}
\pgfmathsetmacro{\nl}{sqrt(\dy*\dy+1)}
\pgfmathsetmacro{\nnx}{-\dy/\nl}
\pgfmathsetmacro{\nny}{1/\nl}
\pgfmathsetmacro{\ttx}{1/\nl}
\pgfmathsetmacro{\tty}{\dy/\nl}
\draw[thick, -{Stealth[length=4pt]}, gray]
  (\tp, \yb) -- ({\tp+0.45*\nnx}, {\yb+0.45*\nny});
\node[gray, above, font=\scriptsize] at ({\tp+0.45*\nnx}, {\yb+0.36*\nny}) {$\eta(\tau)$};
\draw[gray, thin]
  ({\tp+0.07*\ttx}, {\yb+0.07*\tty})
  -- ({\tp+0.07*\ttx+0.07*\nnx}, {\yb+0.07*\tty+0.07*\nny})
  -- ({\tp+0.07*\nnx}, {\yb+0.07*\nny});
\draw[thick, |<->|, gray] (5.25, {0.3*sin(150)-0.7}) -- (5.25, {0.3*sin(150)+0.7});
\node[gray, right, font=\scriptsize] at (5.28, {0.3*sin(150)}) {$2\delta$};
\node[blue!50!black, font=\scriptsize] at (0.4, -0.48) {$T_\delta(\gamma)$};
\draw[thin, |<->|, red!70!black] (-0.12, 0) -- (-0.12, 0.25);
\node[red!70!black, left, font=\scriptsize] at (-0.12, 0.125) {$u$};
\end{tikzpicture}
\caption{The Fermi strip $T_\delta(\gamma)$ around $\gamma$, with punctures $p_1,\ldots,p_5\in S$ and the parallel curve $\gamma_u$.}
\label{fig:fermi}
\end{figure}
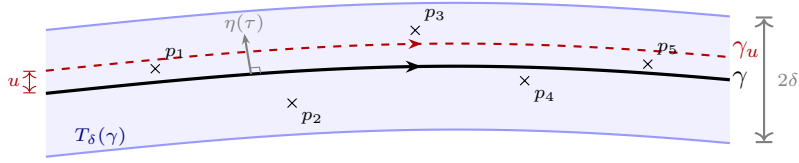

\noindent In Fermi coordinates the induced metric on each parallel curve $\gamma_u$ is $\rho|_{\gamma_u}=\rho_{\tau\tau}\,d\tau^2$, so its $\rho$-length is
\[
\ell_\rho(\gamma_u)=\int^{\ell_\rho(\gamma)}_0\sqrt{\rho_{\tau\tau}(\tau,u)}\,d\tau\,.
\]
Write the Riemannian area form in Fermi coordinates as $dA_\rho=J(\tau,u)\,d\tau\,du$, where
\[
J(\tau,u):=\sqrt{\rho_{\tau\tau}\rho_{uu}-\rho_{\tau u}^2}
\]
is the square root of the determinant of the metric $\rho$ in the coordinates
$(\tau,u)$.  We claim that, for $\delta$ small enough, $J$ and
$\sqrt{\rho_{\tau\tau}}$ are uniformly comparable on the strip.  Indeed, by the
properties of Fermi coordinates (\cite[Proposition~5.26]{LeeRiem}), the metric
at $u=0$ satisfies $\rho_{\tau u}(\tau,0)=0$ and $\rho_{uu}(\tau,0)=1$, so
$J(\tau,0)=\sqrt{\rho_{\tau\tau}(\tau,0)}$.  Now $\rho_{\tau\tau}(\cdot,0)$ is
continuous and attains a positive minimum on the compact circle
$[0,\ell_\rho(\gamma)]\times\{0\}$; by uniform continuity of $\rho_{\tau\tau}$
there is a $\delta>0$---which we also shrink, if necessary, so that the
exponential map of the normal bundle is a diffeomorphism onto $T_\delta(\gamma)$
---such that $\rho_{\tau\tau}>0$ on the whole strip
$[0,\ell_\rho(\gamma)]\times[-\delta,\delta]$.  Then $J$ and
$\sqrt{\rho_{\tau\tau}}$ are continuous and strictly positive there, so the ratio
$J/\sqrt{\rho_{\tau\tau}}$ is a continuous positive function on a compact set.
It follows that there exists $\kappa_0=\kappa_0(\rho,\gamma,\delta)\ge 1$ such
that
\begin{equation}\label{eq:jacobian_compare}
\kappa_0^{-1}\sqrt{\rho_{\tau\tau}(\tau,u)}\le J(\tau,u)\le \kappa_0\sqrt{\rho_{\tau\tau}(\tau,u)}\qquad \forall\,(\tau,u)\in [0,\ell_\rho(\gamma)]\times[-\delta,\delta]\,.
\end{equation}
In particular, for any measurable $f\ge 0$:
\begin{equation}\label{eq:fermi_coor_int}
\kappa_0^{-1}\int^\delta_{-\delta}\int^{\ell_\rho(\gamma)}_{0} f(\tau,u)\,\sqrt{\rho_{\tau\tau}(\tau,u)}\,d\tau\, du\;\le\;\int_{T_\delta(\gamma)}f\,dA_\rho\;\le\;\kappa_0\int^\delta_{-\delta}\int^{\ell_\rho(\gamma)}_{0} f(\tau,u)\,\sqrt{\rho_{\tau\tau}(\tau,u)}\,d\tau\, du\,.
\end{equation}

\begin{lem}\label{lem:main_lem}
There exists a constant $A:=A(\rho, B_0,\alpha)\in\mathbb R_{>0}$ such that for every $1\le p < 2$ and every finite set $S\subset B_0$:
\[
\int_{T_\delta(\gamma)}\lambda_S^p\,d A_\rho\le \frac{A(s+1)}{2-p}\,.
\]
\end{lem}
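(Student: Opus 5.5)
The plan is to bound $\lambda_S$ pointwise by a function that is \emph{local in the nearest puncture}, and then integrate cell by cell over a Voronoi decomposition of the strip. Concretely, I would aim for a pointwise estimate of the form
\[
\lambda_S(b)\;\le\; C_1\Bigl(1+\frac{1}{d_\rho(b,S)}\Bigr),\qquad b\in T_\delta(\gamma)\smallsetminus S,
\]
with $C_1$ depending only on $\rho$, $B_0$ and $T_\delta(\gamma)$, and in particular not on $S$. The mechanism is the distance-decreasing property of $\kappa$ applied to suitable holomorphic disks. Since $T_\delta(\gamma)\Subset B_0$, there is $r_0>0$ such that for every $b\in T_\delta(\gamma)$ the $\rho$-ball of radius $r_0$ around $b$ lies in $B_0$ and sits inside a holomorphic chart. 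By compactness and a finite atlas, the chart is bi-Lipschitz to $\rho$ with uniform constants.

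Set $r:=\min\{r_0,\, c\,d_\rho(b,S)\}$ for a fixed small $c$. Then a coordinate disk of Euclidean radius comparable to $r$ centred at $b$ avoids $S$ and stays inside $B_0$. The inclusion of this disk into $B_0\smallsetminus S$ is holomorphic, so $\kappa_S(b,\partial_z)\le 2/(c'r)$ by the definition of $\kappa$ (or via monotonicity under inclusion). Dividing by $\sqrt{\mu_1(b)}$, which is bounded below uniformly on the compact strip, gives the displayed bound. This is the key step, and the main thing to verify carefully is the uniformity of all the constants: the chart radii, the comparison between $d_\rho$ and the Euclidean distance in charts, and the lower bound on $\mu_1$. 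All of these follow from compactness of $\overline{T_\delta(\gamma)}$ inside $B_0$. The constant term in the estimate handles $S=\emptyset$ and punctures far from $b$.

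Next, I use $(x+y)^p\le 2^{p-1}(x^p+y^p)\le 2(x^p+y^p)$ to split the integral:
\[
\int_{T_\delta(\gamma)}\lambda_S^p\,dA_\rho\le 2C_1^p\Bigl(\operatorname{Area}_\rho(T_\delta(\gamma))+\sum_{q\in S}\int_{V_q}d_\rho(b,q)^{-p}\,dA_\rho\Bigr).
\]
Here $V_q:=\{b\in T_\delta(\gamma): d_\rho(b,q)=d_\rho(b,S)\}$ is the Voronoi cell of $q$. I then enlarge each cell to the whole strip; this loses nothing essential because the goal is only linearity in $s$.

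It remains to show $\int_{T_\delta(\gamma)} d_\rho(b,q)^{-p}\,dA_\rho\le A'/(2-p)$ uniformly in $q\in B_0$ and in $p\in[1,2)$. I would split the strip into the ball $d_\rho(b,q)<r_1$ and its complement. On the complement the integrand is at most $r_1^{-p}\le \max(1,r_1^{-2})$, which contributes a bounded term. On the ball I use geodesic polar coordinates, or a chart comparison, to bound the area of $\{d_\rho<r\}$ by $C_2r^2$ uniformly for $r\le r_1$ (bounded geometry near the compact set). The layer-cake formula then gives
\[
\int_{d_\rho<r_1} d_\rho^{-p}\,dA_\rho\;\le\; C_3\int_0^{r_1}r^{1-p}\,dr\;=\;\frac{C_3\,r_1^{2-p}}{2-p}\;\le\;\frac{C_3\max(1,r_1)}{2-p}.
\]

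Summing over the $s$ punctures and absorbing the area term yields the bound $A(s+1)/(2-p)$, since $1\le 1/(2-p)$ and $C_1^p\le\max(1,C_1)^2$. One subtlety concerns punctures $q$ lying outside the strip or near the boundary of $B_0$. In both cases the local integral is still controlled by the same $r^2$ area bound, applied to balls around points of the compact strip; for such $q$ we can alternatively use that $d_\rho(b,q)\ge d_\rho(b,q')$ for the nearest point $q'$ of the closure of a slightly larger compact neighbourhood. The genuine obstacle, in my view, is the first step: the uniform $1/d_\rho(b,S)$ bound on $\lambda_S$, and specifically making sure the constant is independent of $S$ and of how close the punctures are to one another.
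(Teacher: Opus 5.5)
Your proof is correct and follows essentially the same route as the paper. Both arguments rest on a pointwise bound $\lambda_S(b)\le c_1/\min\{d_\rho(b,S),D\}$, obtained from the distance-decreasing property on small disks avoiding $S$, followed by a Voronoi split over the punctures and the layer-cake formula with $\mathrm{Area}_\rho(\Delta(q,r))\le c_2r^2$ to extract one factor $1/(2-p)$ per puncture. The differences are only bookkeeping: you derive the pointwise bound yourself via coordinate disks instead of citing Phung's Lemma~3.3, and you expand $(1+1/d)^p$ and enlarge each cell to the whole strip, where the paper splits each cell into near and far parts.
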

\proof
Throughout this proof, $\Delta(\cdot,\cdot)=\Delta_\rho(\cdot,\cdot)$ denotes a \emph{metric} ball for the Riemannian distance $d_\rho$ (not a removed topological disk of $B_0$).
Assume $S=\{p_1,\ldots, p_s\}\neq \emptyset$. Consider
the Voronoi decomposition of $T_{\delta}(\gamma)$ induced
by $S$: for each $p_j\in S$, define
\[
V_j:=\{b\in T_{\delta}(\gamma)\colon d_\rho(b, p_j)
\le d_\rho(b,p_i)\;\forall i\neq j \}\,.
\]
Note that the punctures $p_j$ need not lie inside
$T_\delta(\gamma)$. Nevertheless, the cells cover the strip:
$T_{\delta}(\gamma)=\bigcup^s_{j=1}V_j$ (some cells
$V_j$ may be empty, and distinct cells may overlap along
tie sets).

\medskip
\noindent\emph{Step 1: Pointwise and area bounds.}
Put $T:=T_{\delta}(\gamma)$ and fix $b\in T\setminus S$.
Set $D_b:=d_\rho(b,S)>0$.
By~\cite[Lemma~3.3]{Phung}, there exist constants
$c_1,r_1>0$ (depending only on $B$ and $\rho$) such that
for every $b\in B$, every $v\in T_bB\setminus\{0\}$,
and every $0<r<r_1$:
\[
\kappa_{\Delta(b,r)}(b,v)
\;\le\;\frac{c_1}{r}\,\sqrt{\rho_b(v,v)}\,.
\]
By~\cite[Lemma~3.5]{Phung}, there exist constants
$c_2,r_2>0$ such that
$\mathrm{Area}_\rho(\Delta(q,r))\le c_2\,r^2$ for all
$q\in B$ and $0<r\le r_2$.
Set $D:=\min\{d_\rho(T,\partial B_0),\,r_1,\,r_2\}>0$
and $R_b:=\min\{D_b,D\}$. Then
$\Delta(b,R_b)\subset B_0\setminus S$ (it lies in $B_0$
because $R_b\le D\le d_\rho(T,\partial B_0)$, and avoids
$S$ because $R_b\le D_b$), so the decreasing property of
the Kobayashi--Royden metric (applied to the inclusion
$\Delta(b,R_b)\hookrightarrow B_0\smallsetminus S$) gives, for every $v\in T_bB\smallsetminus\{0\}$, the bound $\kappa_{B_0\setminus S}(b,v)\le\kappa_{\Delta(b,R_b)}(b,v)$. Taking the supremum over $v$ and using that \cite[Lemma~3.3]{Phung} holds uniformly in $v$:
\begin{equation}\label{eq:point_bound}
\lambda_S(b)
\;=\;\sup_{v\neq 0}\frac{\kappa_{B_0\setminus S}(b,v)}{\sqrt{\rho_b(v,v)}}
\;\le\;\sup_{v\neq 0}\frac{\kappa_{\Delta(b,R_b)}(b,v)}{\sqrt{\rho_b(v,v)}}
\;\le\;\frac{c_1}{R_b}
\;=\;\frac{c_1}{\min\{D_b,\,D\}}\,.
\end{equation}
Moreover, since $D\le r_2$:
\begin{equation}\label{eq:area_bound}
\mathrm{Area}_\rho(\Delta(q,r))\;\le\;c_2\,r^2
\qquad\text{for all }q\in B\text{ and }r\le D\,.
\end{equation}

\medskip
\noindent\emph{Step 2: Integral over Voronoi cells.}
For $b\in V_j$, the point $p_j$ is the nearest element of
$S$ to $b$, so $D_b=d_{\rho}(b,p_j)$. We split $V_j$ into
two regions:
\[
V_j^{\mathrm{near}}:=\{b\in V_j\colon d_\rho(b,p_j)\le D\}
\,,\qquad
V_j^{\mathrm{far}}:=\{b\in V_j\colon d_\rho(b,p_j)> D\}\,.
\]
On $V_j^{\mathrm{near}}$ we have $R_b=D_b=d_\rho(b,p_j)$,
so \eqref{eq:point_bound} gives
$\lambda_S(b)\le c_1/d_\rho(b,p_j)$. By the ``layer cake representation" in measure theory ($\int f^p\,d\mu=p\int_0^\infty t^{p-1}\mu(\{f>t\})\,dt$, see \cite[Theorem 1.13]{LiebLoss})
and the substitution $r=1/t$:
\begin{equation}\label{eq:int_near}
\begin{aligned}
\int_{V_j^{\mathrm{near}}}\lambda_S^p\, dA_\rho
&\le c_1^p\int_{V_j^{\mathrm{near}}}
  \frac{dA_\rho}{d_\rho(b,p_j)^p}\\
&= p\,c_1^p\int^{+\infty}_{0} t^{p-1}\,
  \textrm{Area}_\rho\bigl\{b\in V_j^{\mathrm{near}}\colon
  d_\rho(b,p_j)^{-1}> t\bigr\}\, dt\\
&= p\,c_1^p\int^{+\infty}_0
  \frac{\textrm{Area}_\rho(V_j^{\mathrm{near}}
  \cap\Delta(p_j,r))}{r^{p+1}}\,dr\\
&\le p\,c_1^p\int^{D}_0
  \frac{\textrm{Area}_\rho(\Delta(p_j,r))}{r^{p+1}}\,dr
  +p\,c_1^p\int_D^{+\infty}
  \frac{\textrm{Area}_\rho(T)}{r^{p+1}}\,dr\\
&\le \frac{p\,c_1^p\, c_2\,D^{2-p}}{2-p}
  +\frac{c_1^p\,\textrm{Area}_\rho(T)}{D^p}\,,
\end{aligned}
\end{equation}
where the fourth line uses
$V_j^{\mathrm{near}}\cap\Delta(p_j,r)\subset\Delta(p_j,r)$
for $r\le D$, and
$\textrm{Area}_\rho(V_j^{\mathrm{near}}\cap\Delta(p_j,r))
\le\textrm{Area}_\rho(T)$ for $r\ge D$. Moreover the fifth
line uses the area bound~\eqref{eq:area_bound} and
$\int_D^{+\infty}r^{-(p+1)}\,dr=D^{-p}/p$.
Note that the integral $\int^D_0 r^{1-p}\,dr$ converges since $p<2$.

On $V_j^{\mathrm{far}}$ we have $R_b=D$, so
$\lambda_S(b)\le c_1/D$ and
\begin{equation}\label{eq:int_far}
\int_{V_j^{\mathrm{far}}}\lambda_S^p\, dA_\rho
\le \frac{c_1^p}{D^p}\,\textrm{Area}_\rho(T)\,.
\end{equation}
Set $c_3:=\max\bigl\{2\max\{1,c_1^2\}\,c_2\max\{1,D\},
\;2\max\{c_1/D,\,c_1^2/D^2\}\,\textrm{Area}_\rho(T)
\bigr\}$.
Since $1\le p<2$ we have 
$c_1^p\le\max\{1,c_1^2\}$,
$D^{2-p}\le\max\{1,D\}$, and
$c_1^p/D^p\le\max\{c_1/D,\,c_1^2/D^2\}$, so combining
\eqref{eq:int_near} and \eqref{eq:int_far}:
\begin{equation}\label{eq:int_voronoi3}
\int_{V_j}\lambda_S^p\, dA_{\rho}
\le\frac{c_3}{2-p}+c_3
\le\frac{2c_3}{2-p}\,,
\end{equation}
where the last inequality uses $1\le 1/(2-p)$ for $p\ge 1$.

\medskip
\noindent\emph{Step 3: Summing over cells.}
Since the cells $V_j$ cover $T$ and $\lambda_S^p\ge 0$,
subadditivity of the integral gives (cells with $V_j=\emptyset$
contribute zero):
\[
\int_T\lambda_S^p\, dA_{\rho}\le\sum^s_{j=1}
\int_{V_j}\lambda_S^p\, dA_{\rho}\le \frac{2c_3 s}{2-p}\,.
\]
If $s=0$ (i.e.\ $S=\emptyset$) on $T$ we have
\[
\lambda_S(b)=\sup_{v\neq 0}\frac{\kappa_{B_0}(b,v)}{\sqrt{\rho_b(v,v)}}
\le M:=\sup_{b\in T}\;\sup_{v\neq 0}
\frac{\kappa_{B_0}(b,v)}{\sqrt{\rho_b(v,v)}}<+\infty\,;
\]
where the finiteness follows from $T\Subset B_0$. In this
case we have
\[
\int_T\lambda_S^p\, dA_{\rho}\le
M^p\,\textrm{Area}_\rho(T)
\le\max\{1,M^2\}\,\textrm{Area}_\rho(T)\,.
\]
Setting $A:=\max\{2c_3,\,\max\{1,M^2\}\,
\textrm{Area}_\rho(T)\}$ the claim is proved.
\endproof

Let us now introduce the main object of this section and then prove the main result.
\begin{defn}\label{def:L_alpha_s}
 Fix a loop $\alpha$ in $B_0$ that doesn't represent a trivial class in $\pi_1(B_0,b_0)$. Let $\Gamma^\alpha_S$ denote the set of loops  in $B_0\setminus S$ that are freely homotopic to $\alpha$ in~$B_0$. We define
 \[
 L(\alpha,s):=\sup_{\substack{S\subset B_0\\\# S\le s}}\ell_{S}(\Gamma^\alpha_S)\,.
 \]
 \end{defn}
Note that since $\alpha$ is chosen (non-trivially) in $B_0$ and the free homotopies  are checked in the non-cuspidal hyperbolic surface $B_0$, we have $\ell_{S}(\Gamma^\alpha_S)>0$ by \cite[Proposition 2.24]{Bor}.

\paragraph{Proof of \Cref{thm:new_bound_L} under the standing simplicity assumption.}
 Consider the smooth simple loop $\gamma\Subset B_0$ freely homotopic to $\alpha$, fixed by the standing assumption at the beginning of the section. Put $T:=T_{\delta}(\gamma)$ parametrized with Fermi coordinates $(\tau, u)$ as above. Set
 \[
L_0:=\sup_{|u|\le\delta}\ell_\rho(\gamma_u)<+\infty\,.
 \]
 Fix an arbitrary $S\subset B_0$ with $\#S=s$ and a parameter $1<p<2$. Define $q:=\frac{p}{p-1}$ (the conjugate exponent, so $1/p+1/q=1$). Recall that in Fermi coordinates $\gamma_u'(\tau)=\partial_\tau|_{(\tau,u)}$ and $\rho_{\tau\tau}(\tau,u)=\rho_{\gamma_u(\tau)}(\gamma_u'(\tau),\gamma_u'(\tau))$, so by the definition of $\lambda_S$ as a directional supremum,
\[
\kappa_S(\gamma_u(\tau),\gamma_u'(\tau))\le\lambda_S(\gamma_u(\tau))\,\sqrt{\rho_{\gamma_u(\tau)}(\gamma_u'(\tau),\gamma_u'(\tau))}=\lambda_S(\tau,u)\,\sqrt{\rho_{\tau\tau}(\tau,u)}\,.
\]
For each $u\in\,]-\delta,\delta[$ with $\gamma_u\cap S=\emptyset$, H\"older's inequality with respect to the measure $d\mu=\sqrt{\rho_{\tau\tau}}\,d\tau$ then gives:
 \begin{equation}\label{eq:holder}
\begin{aligned}
\ell_S(\gamma_u)
&=\int^{\ell_\rho(\gamma)}_{0}
  \kappa_S(\gamma_u(\tau),\gamma_u'(\tau))\,d\tau\\
&\le\int^{\ell_\rho(\gamma)}_{0}
  \lambda_S(\tau,u)\sqrt{\rho_{\tau\tau}(\tau,u)}\,d\tau\\
&\le\left(\int^{\ell_\rho(\gamma)}_{0}
  \lambda_S(\tau,u)^p\sqrt{\rho_{\tau\tau}}\,d\tau
  \right)^{1/p}
  \left(\int^{\ell_\rho(\gamma)}_{0}
  \sqrt{\rho_{\tau\tau}}\,d\tau\right)^{1/q}\\
&=\left(\int^{\ell_\rho(\gamma)}_{0}
  \lambda_S(\tau,u)^p\sqrt{\rho_{\tau\tau}}\,d\tau
  \right)^{1/p}\ell_{\rho}(\gamma_u)^{1/q}\,.
\end{aligned}
\end{equation}
By \Cref{eq:fermi_coor_int} and Lemma \ref{lem:main_lem} we get:
 \begin{equation}\label{eq:fubini}
 \int^\delta_{-\delta}\int^{\ell_\rho(\gamma)}_{0}\lambda_S(\tau,u)^p\sqrt{\rho_{\tau\tau}}\,d\tau\, du\;\le\;\kappa_0\int_T\lambda_S^p\, dA_{\rho}\;\le\;\frac{\kappa_0 A(s+1)}{2-p}\,.
 \end{equation}
The set $E:=\{u\in\,]-\delta,\delta[\colon
\gamma_u\cap S\neq\emptyset\}$ has at most $s$ points,
hence has  measure zero.
Define $g(u):=\int^{\ell_\rho(\gamma)}_{0}
\lambda_S(\tau,u)^p\sqrt{\rho_{\tau\tau}}\,d\tau$ and
$M:=\frac{\kappa_0 A(s+1)}{2\delta(2-p)}$.
By~\eqref{eq:fubini},
\begin{equation}\label{eq:avg_bound}
\frac{1}{2\delta}\int_{-\delta}^{\delta}g(u)\,du
\;\le\; M\,.
\end{equation}
We claim there exists $u_0\in\,]-\delta,\delta[
\,\setminus E$ with $g(u_0)\le M$. If not, then
$g(u)>M$ for all $u\in\,]-\delta,\delta[\,\setminus E$.
Since $g\ge 0$ and $E$ has measure zero:
\[
\frac{1}{2\delta}\int_{-\delta}^{\delta}g(u)\,du
=\frac{1}{2\delta}
\int_{]-\delta,\delta[\,\setminus E}g(u)\,du
>\frac{1}{2\delta}\cdot M\cdot 2\delta = M\,,
\]
contradicting~\eqref{eq:avg_bound}. Hence such $u_0$
exists, and since $u_0\notin E$ we have
$\gamma_{u_0}\cap S=\emptyset$ and
\begin{equation}\label{eq:mean_value}
\int^{\ell_\rho(\gamma)}_{0}
\lambda_S(\tau, u_0)^p\sqrt{\rho_{\tau\tau}(\tau, u_0)}
\,d\tau\;=\;g(u_0)\;\le\; M
\;=\;\frac{\kappa_0 A(s+1)}{2\delta(2-p)}\,.
\end{equation}
Substituting \Cref{eq:mean_value} into \Cref{eq:holder} and using $\ell_\rho(\gamma_{u_0})\le L_0$  we get
\begin{equation}\label{eq:final_inequality}
\ell_{S}(\gamma_{u_0})\le \left(\frac{\kappa_0 A}{2\delta}\right)^{1/p}L_0^{1/q}\left(\frac{s+1}{2-p}\right)^{1/p}\,.
\end{equation}
Since the Fermi coordinate map is a diffeomorphism,
the family $\{\gamma_u\}_{u\in[0,u_0]}$ is a free
homotopy in $T_\delta(\gamma)\subset B_0$ from
$\gamma_0=\gamma$ to $\gamma_{u_0}$. As $\gamma$ is
freely homotopic to $\alpha$ in $B_0$ (by construction),
$\gamma_{u_0}$ is freely homotopic to $\alpha$ in $B_0$
by transitivity. Moreover $\gamma_{u_0}\subset
B_0\smallsetminus S$ (since $u_0\notin E$), so
$\gamma_{u_0}\in\Gamma^\alpha_S$.

\medskip

\noindent We now assume $s\ge 1$ (the case $s=0$ is handled below) and set
\[
p:=2-\frac{1}{\log(s+2)}\,,\quad\text{so that}\quad 2-p=\frac{1}{\log(s+2)}\,,\quad p>1\,.
\]
Then
\[
\frac{s+1}{2-p}=(s+1)\log(s+2)\,,\qquad \frac{1}{p}=\frac{\log(s+2)}{2\log(s+2)-1}\,.
\]
We split
\[
\left(\frac{s+1}{2-p}\right)^{1/p}=\bigl((s+1)\log(s+2)\bigr)^{1/2}\cdot\bigl((s+1)\log(s+2)\bigr)^{1/p-1/2}
\]
and claim the second factor is $O(1)$. Indeed, $1/p-1/2=O(1/\log s)$ and
\[
\log\bigl((s+1)\log(s+2)\bigr)=\log(s+1)+\log\log(s+2)=O(\log s)\,,
\]
so
\[
\bigl((s+1)\log(s+2)\bigr)^{1/p-1/2}=\exp\!\bigl(O(1/\log s)\cdot O(\log s)\bigr)=\exp(O(1))=O(1)\,.
\]
Consider now the first factor of \Cref{eq:final_inequality},
namely $C_0(p):=(\kappa_0 A/(2\delta))^{1/p}L_0^{1/q}$. As 
$p\to 2^-$ we have $1/p\to 1/2$ and $1/q\to 1/2$, so 
$C_0(p)\to(\kappa_0 A/(2\delta))^{1/2}L_0^{1/2}$. Since 
$C_0$ is continuous on $[2-1/\log 3, 2[$ and has a finite limit at 
$p=2$, it is bounded there.
\medskip

Combining, we conclude that, \emph{under the standing assumption that $\alpha$ admits a smooth simple representative $\gamma\Subset B_0$}, there exists $C>0$, depending only on $\alpha$, $B_0$, and $\rho$, such that for every $S\subset B_0$ with $\#S=s\ge 1$:
\[
\ell_S(\Gamma^\alpha_S)\le\ell_{S}(\gamma_{u_0})\le C\sqrt{(s+1)\log(s+2)}\,.
\]
Since this holds for every such~$S$, and since sets with $\#S=s'<s$ satisfy the same bound a fortiori ($s\mapsto\sqrt{(s+1)\log(s+2)}$ being increasing), taking the supremum over all $S\subset B_0$ with $\#S\le s$ gives $L(\alpha,s)\le C\sqrt{(s+1)\log(s+2)}$. For $s=0$, $L(\alpha,0)=\inf_\beta\ell_{\kappa_{B_0}}(\beta)$ is a fixed positive constant, which is absorbed into~$C$. 
\qed

\medskip
For the application to generalized integral points, the Parshin cocycle
argument requires loops representing the generators of
$\pi_1(B_0,b_0)$ that share a \emph{common base point}
and are conjugated to the generators by a \emph{single}
path.
Theorem~\ref{thm:new_bound_L}, applied independently to
each generator, produces loops with the optimal Kobayashi length bound but with potentially different base points.
The next proposition shows that a common base point can be arranged without degrading the asymptotic bound.

\begin{lem}\label{lem:prescribed-tangent}
Let $M$ be a smooth orientable Riemannian surface without boundary, let $p\in M$,
and let $w\in T_pM$ be a unit tangent vector.  Then for every smooth
simple loop $\gamma$ at~$p$ with $\gamma\Subset M$, there exists a smooth
simple loop $\tilde\gamma$ at~$p$ with $\tilde\gamma\Subset M$,
homotopic to~$\gamma$ relative to~$p$, and satisfying
\[
  \frac{\tilde\gamma'(0)}{|\tilde\gamma'(0)|} = w\,.
\]
\end{lem}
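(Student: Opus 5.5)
The idea is to modify $\gamma$ only inside a small coordinate disk around $p$, by rotating its initial direction. A smooth loop based at $p$ has a well-defined tangent direction there, since $\gamma'(0)=\gamma'(\ell)$. Simplicity means that near $p$ the loop is a single embedded arc through $p$. The unit vector $v:=\gamma'(0)/|\gamma'(0)|$ and the target $w$ both lie on the unit circle of $T_pM$. We will deform the arc near $p$ so that $v$ is rotated onto $w$, while keeping the loop simple and fixing $p$.

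\textbf{Step 1 (local chart).} Choose a chart $\phi\colon U\to\mathbb D_2$ (the disk of radius $2$) centered at $p$. Take it small enough that $\gamma\cap U$ is a single arc through $p$; this is possible because $\gamma$ is an embedding of $S^1$ and $\gamma^{-1}(\overline U)$ shrinks to a small interval around $0$. Straighten this arc by a diffeomorphism of the chart so that $\phi(\gamma\cap U)$ is the segment $(-2,2)\times\{0\}$ traversed in direction $e_1$. Since $d\phi_p$ is invertible, $w$ becomes some vector $w'\neq 0$. Let $\theta$ be the angle from $e_1$ to $w'/|w'|$. The Riemannian normalization is irrelevant here, because only the direction of $\tilde\gamma'(0)$ matters.

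\textbf{Step 2 (rotation twist).} Fix a smooth cutoff $\chi\colon[0,2)\to[0,1]$ with $\chi\equiv1$ on $[0,1/2]$ and $\chi\equiv0$ on $[1,2)$. Define $\Psi(z):=e^{i\theta\chi(|z|)}z$ on $\mathbb D_2$. This map is a diffeomorphism: in polar coordinates it is $(r,\varphi)\mapsto(r,\varphi+\theta\chi(r))$, which is smooth, with smooth inverse $(r,\varphi)\mapsto(r,\varphi-\theta\chi(r))$; it is smooth at $0$ because it is a linear rotation near $0$. It fixes $0$ and is the identity near $\partial\mathbb D_2$. Extend $\phi^{-1}\circ\Psi\circ\phi$ by the identity outside $U$ to get a diffeomorphism $F$ of $M$ fixing $p$. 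Set $\tilde\gamma:=F\circ\gamma$. It is smooth and simple, since it is the image of an embedding under a diffeomorphism. It satisfies $\tilde\gamma\Subset M$, because it agrees with $\gamma$ outside a compact subset of $U$. Its tangent at $p$ is $dF_p(\gamma'(0))$, which in the chart is $e^{i\theta}$ times a positive multiple of $e_1$. This has direction $w'$, so $\tilde\gamma'(0)/|\tilde\gamma'(0)|=w$.

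\textbf{Step 3 (homotopy rel $p$).} Use $F_t:=\phi^{-1}\circ\Psi_t\circ\phi$ with $\Psi_t(z)=e^{it\theta\chi(|z|)}z$, $t\in[0,1]$. This is an isotopy from the identity to $F$, each $F_t$ fixing $p$ and supported in $U$. Then $F_t\circ\gamma$ is a homotopy from $\gamma$ to $\tilde\gamma$ through loops based at $p$, so the two loops are homotopic relative to $p$.

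\textbf{Expected obstacle.} The only delicate point is Step 1: arranging a chart in which $\gamma\cap U$ is a single straight arc through $p$. This follows from the embedded-submanifold slice chart theorem applied to the embedded circle $\gamma(S^1)$ at $p$. If one prefers to avoid straightening, note that $\Psi$ is a global diffeomorphism of $\mathbb D_2$ regardless of what $\gamma$ looks like there. Simplicity is then automatic, and only the computation of $dF_p$ (a rotation by $\theta$ in chart coordinates) is needed. In that case $\theta$ is the angle from $d\phi_p(v)$ to $w'$, so straightening is not even required. Orientability of $M$ is not essential; the argument is purely local.
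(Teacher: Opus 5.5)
Your proposal is correct and is essentially the paper's argument. The paper also composes $\gamma$ with a diffeomorphism that rotates each small circle around $p$ by angle $\chi(r)\theta$ and is the identity outside a small disk, and it gets the homotopy rel $p$ from the isotopy obtained by replacing $\chi$ with $t\chi$. The only difference is cosmetic: the paper works in normal coordinates via $\exp_p$, so the rotation is a $\rho_p$-isometry of $T_pM$, whereas you use an arbitrary chart and measure $\theta$ there, which is equally valid since only the direction of $\tilde\gamma'(0)$ matters (and, as you note, the straightening in Step~1 is unnecessary).
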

\begin{proof}
Let $w_0:=\gamma'(0)/|\gamma'(0)|$ be the unit tangent vector of~$\gamma$
at~$p$.  If $w_0=w$ there is nothing to prove, so assume $w_0\neq w$.
Let $\theta\in\,]0,2\pi[$ be the angle from~$w_0$ to~$w$ and for each
$t\in[0,1]$ let $R_t\in\mathrm{SO}(2)$ denote the rotation of~$T_pM$ by
angle~$t\theta$, so that $R_0=\mathrm{Id}$ and $R_1(w_0)=w$.

Choose $\epsilon>0$ small enough that the exponential map $\exp_p$ is a
diffeomorphism from $B(0,\epsilon)\subset T_pM$ onto an open neighborhood
$U$ of~$p$ in~$M$ with $U\Subset M$.  Choose a smooth cutoff function
$\chi\colon[0,\infty)\to[0,1]$ with $\chi\equiv 1$ on $[0,\epsilon/3]$
and $\chi\equiv 0$ on $[2\epsilon/3,\infty)$.  Define the map
$\phi\colon M\to M$ by
\[
  \phi(x)\;:=\;
  \begin{cases}
    \exp_p\!\bigl(\,R_{\chi(|y|)}\,y\,\bigr),
      & \text{if } x=\exp_p(y)\in U,\\[4pt]
    x, & \text{if } x\notin U,
  \end{cases}
\]
where $|y|$ denotes the norm in~$T_pM$.  Since $\chi\equiv 1$ near
$|y|=0$, the map~$\phi$ equals $\exp_p\circ\,R_1\circ\exp_p^{-1}$ in a
neighborhood of~$p$, and since $\chi\equiv 0$ for $|y|\ge 2\epsilon/3$,
$\phi$~is the identity outside~$U$.  In geodesic polar coordinates
$(r,\psi)$ centered at~$p$ (via $\exp_p$), the map reads
$\phi(r,\psi)=(r,\,\psi+\chi(r)\theta)$: it preserves the radius~$r$ and
rotates each geodesic circle $\{r=\mathrm{const}\}$ by the angle
$\chi(r)\theta$.  On each circle this is a bijection, and distinct circles
map to distinct circles, so $\phi$ is a bijection of~$U$ fixing~$p$;
together with $\phi=\mathrm{id}$ outside~$U$ this makes $\phi$ a bijection
of~$M$.  It is smooth with smooth inverse $\phi^{-1}(r,\psi)=(r,\psi-\chi(r)\theta)$
(both expressions are smooth across $r=0$ because $\chi\equiv 1$ there, where
$\phi$ is the fixed rotation $R_1$), so $\phi$ is a diffeomorphism of~$M$.
Moreover:
\begin{enumerate}[\rm(i)]
  \item $\phi(p)=p$ and $d_p\phi=R_1$, so
    $(\phi\circ\gamma)'(0)=R_1(\gamma'(0))=|\gamma'(0)|\,w$,
    which has unit tangent direction~$w$;
  \item $\phi$ is the identity outside~$U$, so $\phi\circ\gamma$ coincides
    with~$\gamma$ outside~$U$ and in particular
    $\phi\circ\gamma\Subset M$;
  \item $\phi$ is a diffeomorphism, so $\phi\circ\gamma$ is simple;
    \item the family $\phi_t$ (defined by replacing $\chi$ with $t\chi$
    for $t\in[0,1]$) satisfies $\phi_0=\mathrm{Id}$ and $\phi_1=\phi$,
    and $\phi_t(p)=p$ for all~$t$. Hence $t\mapsto\phi_t\circ\gamma$
    is a homotopy of loops based at~$p$ from~$\gamma$
    to~$\tilde\gamma$, so
    $[\tilde\gamma]=[\gamma]$ in $\pi_1(M,p)$.
\end{enumerate}
Setting $\tilde\gamma:=\phi\circ\gamma$ completes the proof.
\end{proof}

\begin{prop}\label{prop:common-base}
Let $B_0$ be as above and fix a simple basis $\alpha_1,\dots,\alpha_k$
of\/ $\pi_1(B_0,b_0)$.  There exist constants $C,\delta'>0$, depending
only on $\alpha_1,\dots,\alpha_k$, $B_0$, and~$\rho$, such that for every
finite set $S\subset B_0$ with $\#S=s\ge1$, there exist a point
$q\in B_0\setminus S$, a path $\sigma$ from~$b_0$ to~$q$ in~$B_0$ with
$\ell_\rho(\sigma)\le\delta'$, and loops
$\hat\gamma_1,\dots,\hat\gamma_k\subset B_0\setminus S$ based at~$q$
such that
\[
  [\sigma^{-1}\circ\hat\gamma_j\circ\sigma]=\alpha_j
  \quad\text{in }\pi_1(B_0,b_0)
  \qquad\text{for every }j=1,\dots,k,
\]
and
\[
  \ell_S(\hat\gamma_j)\;\le\;C\sqrt{(s+1)\log(s+2)}
  \qquad\text{for all }j=1,\dots,k.
\]
Moreover, $\sigma$ and the loops $\hat\gamma_1,\dots,\hat\gamma_k$ are
contained in a fixed compact set $\mathcal K_0\Subset B_0$ and satisfy
$\ell_\rho(\hat\gamma_j)\le L_0$, where $\mathcal K_0$ and $L_0>0$ depend only
on $\alpha_1,\dots,\alpha_k$, $B_0$, and~$\rho$ (not on $S$).
\end{prop}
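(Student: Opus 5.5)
Using Lemma~\ref{lem:prescribed-tangent}, I will first normalize the basis loops so that all of them leave $b_0$ in the same direction. Then a single Fermi-type displacement will move all of them simultaneously. Concretely, choose smooth simple loops $\gamma_1,\dots,\gamma_k$ at $b_0$ with $[\gamma_j]=\alpha_j$ and $\gamma_j\Subset B_0$. Fix one unit vector $w\in T_{b_0}B$. Replacing each $\gamma_j$ by the output of Lemma~\ref{lem:prescribed-tangent} (applied in the open surface $M=B_0$), we may assume $\gamma_j'(0)/|\gamma_j'(0)|=w$ for all $j$. The homotopy class relative to $b_0$ is unchanged.

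Let $\eta$ be the unit normal to $w$ at $b_0$, and let $\sigma_u(v)=\exp_{b_0}(v\,\eta)$, $v\in[0,u]$, be the short geodesic transversal. Since all $\gamma_j$ share the tangent $w$ at $b_0$, their Fermi strips $T_\delta(\gamma_j)$ share, near $b_0$, the same normal geodesic through $b_0$. Hence for each $u\in[-\delta,\delta]$ the parallel curves $\gamma_{j,u}$ all pass through the common point $q_u:=\exp_{b_0}(u\eta)$ at parameter $\tau=0$. The main obstacle is that the $\gamma_j$ are simple individually but intersect one another. Since I only need Fermi coordinates along each curve separately, I choose $\delta$ below the minimum of the tubular radii of the $\gamma_j$. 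The normal geodesic at $\tau=0$ is literally the same curve $u\mapsto\exp_{b_0}(u\eta)$ for every $j$, so $q_u$ is indeed common. I set $\hat\gamma_j:=\gamma_{j,u_0}$ and $\sigma:=\sigma_{u_0}$, so that $\ell_\rho(\sigma)\le\delta=:\delta'$. The free homotopy $\{\gamma_{j,u}\}_{u\in[0,u_0]}$ carries the base point along $\sigma$, which gives $[\sigma^{-1}\circ\hat\gamma_j\circ\sigma]=\alpha_j$ in $\pi_1(B_0,b_0)$.

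The remaining task is to choose a single $u_0$ that works for all $j$ simultaneously. Repeat the proof of Theorem~\ref{thm:new_bound_L} for each $j$ with $p=2-1/\log(s+2)$, and define
\[
g_j(u):=\int_0^{\ell_\rho(\gamma_j)}\lambda_S(\tau,u)^p\sqrt{\rho^{(j)}_{\tau\tau}}\,d\tau .
\]
By Lemma~\ref{lem:main_lem} and \eqref{eq:fermi_coor_int}, each satisfies $\frac1{2\delta}\int_{-\delta}^{\delta}g_j\le M_j:=\kappa_{0,j}A_j(s+1)/(2\delta(2-p))$. Summing the normalized functions $G(u):=\sum_j g_j(u)/M_j$ gives $\frac1{2\delta}\int G\le k$. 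Since the exceptional set $E=\bigcup_j\{u:\gamma_{j,u}\cap S\neq\emptyset\}$ is finite (at most $ks$ points), the same averaging argument produces $u_0\notin E$ with $G(u_0)\le k$. In particular $g_j(u_0)\le kM_j$ for every $j$, and $q=q_{u_0}\notin S$. The Hölder estimate \eqref{eq:holder} then gives
\[
\ell_S(\hat\gamma_j)\le (kM_j)^{1/p}L_{0,j}^{1/q}.
\]
The factor $k^{1/p}\le k$ is absorbed into the constant. The same asymptotic analysis as in the proof of Theorem~\ref{thm:new_bound_L} then yields $\ell_S(\hat\gamma_j)\le C\sqrt{(s+1)\log(s+2)}$.

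Finally, take $\mathcal K_0:=\bigcup_j T_\delta(\gamma_j)$, which is compact in $B_0$ and contains $\sigma$ and all $\hat\gamma_j$. Take $L_0:=\max_j\sup_{|u|\le\delta}\ell_\rho(\gamma_{j,u})$. Both depend only on the chosen representatives, that is, on $\alpha_1,\dots,\alpha_k$, $B_0$ and $\rho$, and not on $S$.
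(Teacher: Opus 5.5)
Your proposal is correct and follows the paper's proof essentially step for step. You normalize the basis loops via Lemma~\ref{lem:prescribed-tangent} so they share the tangent $w$ at $b_0$, hence the normal $\eta_0$ and the common base points $q(u)=\exp_{b_0}(u\eta_0)$; you take a common Fermi width $\delta$ and run the averaging argument on a combined function to get one $u_0$ outside the finite exceptional set that works for all $j$. The only cosmetic difference is that you average the normalized sum $\sum_j g_j/M_j$ (getting $g_j(u_0)\le kM_j$), whereas the paper averages $\sum_j g_j$ against $M=\sum_j M_j$ (getting $g_j(u_0)\le M$); either way only a constant absorbed into $C$ is lost.
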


\begin{proof}
Fix a unit tangent vector $w\in T_{b_0}B_0$ and let $\eta_0$ be the
rotation of~$w$ by $+\pi/2$ with respect to~$\rho$ and the orientation
of~$B$.  By
Lemma~\ref{lem:prescribed-tangent}, for each $j=1,\dots,k$ we can choose
a smooth simple loop $\gamma^{(j)}$ at~$b_0$, compactly contained
in~$B_0$, homotopic to~$\alpha_j$ relative to~$b_0$, and whose unit tangent vector
at~$b_0$ equals~$w$.  Since all loops share the tangent direction~$w$
at~$b_0$, and since the unit normal field along every loop is the
$+\pi/2$-rotation of the unit tangent (the convention fixed
in~\S\ref{sec:growth}), their unit normal vectors at~$b_0$ all
equal~$\eta_0$.  For
each~$j$, construct the Fermi strip $T_\delta(\gamma^{(j)})$ as in the
proof of Theorem~\ref{thm:new_bound_L}, with a common width $\delta>0$
small enough for all~$k$ strips.  The parallel curve
$\gamma_u^{(j)}$ has base point
\[
  q(u)\;:=\;\exp_{b_0}(u\,\eta_0),
\]
which is independent of~$j$. For $1<p<2$, define
\[
  g_j(u)
  \;:=\;
  \int_0^{\ell_\rho(\gamma^{(j)})}
    \lambda_S(\tau,u)^p\,\sqrt{\rho_{\tau\tau}}\,d\tau
\]
and set $g:=\sum_{j=1}^k g_j$.  Let
$E_j:=\bigl\{u\in\,]{-}\delta,\delta[\;:\;
\gamma_u^{(j)}\cap S\neq\emptyset\bigr\}$; then $\#E_j\le s$, so the set
$E:=\bigcup_{j=1}^k E_j$ has at most~$ks$ points and in particular has
measure zero.  By~\eqref{eq:fermi_coor_int} and
Lemma~\ref{lem:main_lem} applied to each Fermi strip,
\[
  \frac{1}{2\delta}\int_{-\delta}^{\delta}g(u)\,du
  \;\le\;
  \sum_{j=1}^{k}\frac{\kappa_0^{(j)}A_j(s+1)}{2\delta(2-p)}
  \;=:\;M\,,
\]
where $\kappa_0^{(j)}$ and $A_j$ are the constants
from~\eqref{eq:fermi_coor_int} and Lemma~\ref{lem:main_lem}
corresponding to the strip $T_\delta(\gamma^{(j)})$.  By the mean-value
argument used in the proof of Theorem~\ref{thm:new_bound_L}, there exists
$u_0\in\,]{-}\delta,\delta[\,\setminus E$ with $g(u_0)\le M$.  In
particular, $g_j(u_0)\le M$ for every~$j$.

Since $u_0\notin E$, the parallel curve $\gamma_{u_0}^{(j)}$ avoids~$S$
for every~$j$.  Applying H\"older's inequality~\eqref{eq:holder} with
$g_j(u_0)\le M$ in place of the individual bound, and the optimisation
$p=2-1/\log(s+2)$ exactly as in the proof of
Theorem~\ref{thm:new_bound_L}, we obtain
\[
  \ell_S\!\bigl(\gamma_{u_0}^{(j)}\bigr)
  \;\le\;
  C_j'\,\sqrt{(s+1)\log(s+2)}
\]
for every $j=1,\dots,k$, where $C_j'>0$ depends only on
$\alpha_1,\dots,\alpha_k$, $B_0$, and~$\rho$.

Set $\hat\gamma_j:=\gamma_{u_0}^{(j)}$ and
$q:=q(u_0)=\exp_{b_0}(u_0\,\eta_0)\in B_0\setminus S$.  Define the path
$\sigma\colon[0,|u_0|]\to B_0$ by $\sigma(u):=\exp_{b_0}\bigl(\tfrac{u_0}{|u_0|}\,u\,\eta_0\bigr)$; it
connects~$b_0$ to~$q$ with $\ell_\rho(\sigma)=|u_0|\le\delta$. For each~$j$, the family $\{\gamma_u^{(j)}\}$ for $u$ between $0$
and~$u_0$ is a free homotopy in
$T_\delta(\gamma^{(j)})\subset B_0$ from
$\gamma_0^{(j)}=\gamma^{(j)}$ (based at~$b_0$) to
$\gamma_{u_0}^{(j)}=\hat\gamma_j$ (based at~$q$), and this homotopy
moves the base point along~$\sigma$.  It follows that
$[\sigma^{-1}\circ\hat\gamma_j\circ\sigma]=[\gamma^{(j)}]=\alpha_j$ in
$\pi_1(B_0,b_0)$ for every~$j$.  Finally,
$\hat\gamma_j=\gamma^{(j)}_{u_0}\subset T_\delta(\gamma^{(j)})$ and
$\sigma([0,|u_0|])\subset T_\delta(\gamma^{(1)})$, so the last assertion holds
with $\mathcal K_0:=\bigcup_{i=1}^k T_\delta(\gamma^{(i)})\Subset B_0$ and
$L_0:=\max_{1\le i\le k}\sup_{|u|\le\delta}\ell_\rho\bigl(\gamma^{(i)}_u\bigr)<+\infty$.
Setting $C:=\max_j C_j'$ and
$\delta':=\delta$ completes the proof.
\end{proof}

\subsection{Reduction to the simple case}\label{sec:reduction-to-simple}

In this subsection we lift the standing simplicity assumption made in
\S\ref{sec:growth} and complete the proof of \Cref{thm:new_bound_L} for
arbitrary nontrivial $\alpha\in\pi_1(B_0,b_0)$.

We first recall a classical fact. Since $B_0$ has the homotopy type of a
compact orientable surface of genus $g$ with $t$ boundary circles, the
fundamental group $\pi_1(B_0,b_0)$ is \emph{free} of rank $k:=2g+t-1$,
and admits a basis $\alpha_1,\dots,\alpha_k$ of smooth simple loops at
$b_0$. Indeed, such a surface deformation-retracts onto a spine
consisting of $k$ smoothly embedded circles meeting only at~$b_0$ (the
standard genus loops together with loops encircling all but one of the
boundary circles); each spine circle is a smooth simple loop. For background on curves in surfaces we
refer to \cite[Chapter~1]{FM}. Up to smoothing, this notion coincides
with the \emph{simple base} of \cite[\S 3.1]{Phung}, whose
representatives are only piecewise smooth. Such a basis is exactly what
we have been calling a \emph{simple basis} throughout the paper (in
particular in Proposition~\ref{prop:common-base}, whose loops are taken
from a basis of this kind).

\paragraph{Proof of \Cref{thm:new_bound_L}, general case.}
Fix a nontrivial loop $\alpha\subset B_0$ and a simple basis
$\alpha_1,\dots,\alpha_k$ of $\pi_1(B_0,b_0)$ as above. Write $\alpha$ as a (reduced)
word in this basis:
\[
\alpha = \alpha_{i_1}^{\epsilon_1}\cdots \alpha_{i_m}^{\epsilon_m}\quad\text{in }
\pi_1(B_0,b_0),\qquad\epsilon_j\in\{\pm 1\},\quad i_j\in\{1,\dots,k\}.
\]
The word length $m=m(\alpha)\ge 1$ is determined by $\alpha$ and the basis.

Fix $S\subset B_0$ with $\#S=s\ge 1$. By Proposition~\ref{prop:common-base}
applied to the simple basis $\alpha_1,\dots,\alpha_k$, there exist
\begin{itemize}
\item a point $q\in B_0\setminus S$,
\item a path $\sigma\colon[0,1]\to B_0$ from $b_0$ to $q$ with
$\ell_\rho(\sigma)\le\delta'$,
\item loops $\hat\gamma_1,\dots,\hat\gamma_k\subset B_0\setminus S$ based
at $q$,
\end{itemize}
such that $[\sigma^{-1}\circ\hat\gamma_j\circ\sigma]=\alpha_j$ in
$\pi_1(B_0,b_0)$ and
\[
\ell_S(\hat\gamma_j)\le C_0\sqrt{(s+1)\log(s+2)}\quad\text{for all }
j=1,\dots,k,
\]
where $C_0>0$ depends only on $\alpha_1,\dots,\alpha_k$, $B_0$, and $\rho$.

Define the concatenation
\[
\hat\gamma_\alpha:=\hat\gamma_{i_m}^{\epsilon_m}\circ\hat\gamma_{i_{m-1}}^{\epsilon_{m-1}}\circ\cdots\circ\hat\gamma_{i_1}^{\epsilon_1},
\]
a piecewise smooth loop based at~$q$. We verify that
$\hat\gamma_\alpha\in\Gamma^\alpha_S$ and bound its Kobayashi length.

\smallskip
\noindent\textit{Membership in $\Gamma^\alpha_S$.} Each $\hat\gamma_{i_j}$
is contained in $B_0\setminus S$, hence so is the concatenation
$\hat\gamma_\alpha$. To see that $\hat\gamma_\alpha$ is freely homotopic to
$\alpha$ in $B_0$, insert $\sigma\circ\sigma^{-1}$ between consecutive
factors and use distributivity of conjugation:
\[
[\sigma^{-1}\circ\hat\gamma_\alpha\circ\sigma]
=[\sigma^{-1}\circ\hat\gamma_{i_1}^{\epsilon_1}\circ\sigma]\cdots[\sigma^{-1}\circ\hat\gamma_{i_m}^{\epsilon_m}\circ\sigma]
=\alpha_{i_1}^{\epsilon_1}\cdots\alpha_{i_m}^{\epsilon_m}=\alpha\quad\text{in }\pi_1(B_0,b_0).
\]
Thus $\hat\gamma_\alpha$, viewed as a loop based at $q$, is conjugate
(via the path $\sigma$) to a loop based at $b_0$ representing $\alpha$;
equivalently, the free homotopy class of $\hat\gamma_\alpha$ in $B_0$
coincides with the free homotopy class of $\alpha$.

\smallskip
\noindent\textit{Kobayashi length bound.} Since $\kappa_S$ is a Finsler
pseudo-metric, its homogeneity gives
$\kappa_S(b,-v)=\kappa_S(b,v)$, hence
$\ell_S(\hat\gamma_{i_j}^{-1})=\ell_S(\hat\gamma_{i_j})$. By additivity of
$\ell_S$ over concatenation,
\[
\ell_S(\hat\gamma_\alpha)=\sum_{j=1}^{m}\ell_S\bigl(\hat\gamma_{i_j}^{\epsilon_j}\bigr)
=\sum_{j=1}^{m}\ell_S(\hat\gamma_{i_j})\le m\cdot C_0\sqrt{(s+1)\log(s+2)}.
\]

\smallskip
\noindent\textit{Conclusion.} Setting $C:=m\cdot C_0$, which depends only
on $\alpha$ (through its word length $m$ in the fixed basis), the simple
basis $\alpha_1,\dots,\alpha_k$, $B_0$, and $\rho$, we obtain
\[
\ell_S(\Gamma^\alpha_S)\le\ell_S(\hat\gamma_\alpha)\le C\sqrt{(s+1)\log(s+2)}.
\]
As $S\subset B_0$ with $\#S=s\ge 1$ was arbitrary, and since sets with
$\#S<s$ satisfy the same bound a fortiori (the right-hand side being
increasing in $s$), taking the supremum over all $S$ with $\#S\le s$
yields
\[
L(\alpha,s)\le C\sqrt{(s+1)\log(s+2)}\quad\text{for all }s\ge 1.
\]
For $s=0$, $L(\alpha,0)=\inf_\beta\ell_{\kappa_{B_0}}(\beta)$ is a fixed positive constant, absorbed into~$C$.
\qed
\medskip

We now record two immediate consequences.

\begin{cor}\label{cor:asympt_impr}
Fix a loop $\alpha\subset B_0$  representing a nontrivial class in $\pi_1(B_0,b_0)$, then
\[
\lim_{s\to+\infty}\frac{\log L(\alpha,s)}{\log s}=\frac{1}{2}\,.
\]
\end{cor}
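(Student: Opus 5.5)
The plan is to squeeze $\log L(\alpha,s)/\log s$ between the logarithms of the two known bounds and let $s\to+\infty$. For the upper bound we invoke \Cref{thm:new_bound_L}, now proved in full generality after \S\ref{sec:reduction-to-simple}. It gives $L(\alpha,s)\le C\sqrt{(s+1)\log(s+2)}$, hence
\[
\log L(\alpha,s)\le \log C+\tfrac12\log(s+1)+\tfrac12\log\log(s+2)\,.
\]
Dividing by $\log s$ for $s\ge 2$, the terms $\log C/\log s$ and $\log\log(s+2)/\log s$ tend to $0$. The term $\log(s+1)/\log s$ tends to $1$. Therefore $\limsup_{s\to\infty}\log L(\alpha,s)/\log s\le \tfrac12$.

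For the lower bound we use Phung's estimate \cite[Theorems~B,~C]{Phung}, recalled in the introduction: there is $c>0$ with $L(\alpha,s)\ge c\sqrt{s}/\log(s+2)$. Taking logarithms gives
\[
\log L(\alpha,s)\ge \log c+\tfrac12\log s-\log\log(s+2)\,,
\]
and dividing by $\log s$ yields $\liminf_{s\to\infty}\log L(\alpha,s)/\log s\ge\tfrac12$. Since $L(\alpha,s)>0$ for every $s$ (it is at least $L(\alpha,0)>0$ by \cite[Proposition 2.24]{Bor}, and $L$ is nondecreasing in $s$), all the logarithms above are defined. Combining the two inequalities gives the limit $\tfrac12$.

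The only point needing care is the lower bound. Phung's result must apply to an arbitrary nontrivial $\alpha$ with a constant $c$ independent of $s$, and it must be stated for exactly the same quantity $L(\alpha,s)$, that is, with the same free homotopy convention in $B_0$. I would check the hypotheses of \cite[Theorem C]{Phung} against Definition~\ref{def:L_alpha_s}. If Phung's result is stated only for a specific class of loops, or only along a subsequence of $s$, monotonicity of $L$ in $s$ extends it to all large $s$. Apart from this, the argument is elementary asymptotics.
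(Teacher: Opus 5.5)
Your argument is correct and is the same as the paper's proof: sandwich $L(\alpha,s)$ between Phung's lower bound $c\sqrt{s}/\log(s+2)$ \cite[Theorem~C]{Phung} and the upper bound $C\sqrt{(s+1)\log(s+2)}$ of \Cref{thm:new_bound_L}, then take logarithms. Your fallback remark is not needed, since Phung's bound holds for every $s$, and as stated it is not quite right: monotonicity extends a bound known only along a subsequence $s_n$ to all large $s$ only when the subsequence is not too sparse (for instance $\log s_{n+1}/\log s_n\to 1$).
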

\begin{proof}
Combining \Cref{thm:new_bound_L} with Phung's lower bound~\cite[Theorem~C]{Phung} gives
\[
\frac{c\sqrt s}{\log (s+2)}\le L(\alpha,s)\le C\sqrt{(s+1)\log(s+2)}
\]
so the result follows immediately.
\end{proof}

\begin{rem}\label{rem:sharp_exponent}
\Cref{cor:asympt_impr} 
gives $L(\alpha,s)=O(s^{1/2+\varepsilon})$ for every $\varepsilon>0$, but
the precise asymptotic behavior of $L(\alpha,s)$ remains open: between
Phung's lower bound $c\sqrt{s}/\log(s+2)$ and our upper bound
$C\sqrt{(s+1)\log(s+2)}$ there is still a multiplicative gap of order
$(\log(s+2))^{3/2}$. Determining the exact asymptotics of $L(\alpha,s)$
is, to our knowledge, an open problem.
\end{rem}

\section{Bounding generalized integral points}\label{sec:phung_proof}
\subsection{Phung's bound}
In this section we revisit the proof of Theorem~\ref{thm:main_phung}. We follow Phung's strategy \cite{Phung} closely, and we recall the structure of the argument in enough detail to identify where our improvement enters (\S\ref{sec:improved}). Our account differs from \emph{loc.\ cit.}\ in the following respects; each point is taken up again, with full notation, at the corresponding place of the proof below.
\begin{itemize}
\item Phung bounds the Kobayashi--Royden metric of $(\mathcal A\smallsetminus\mathcal D)|_{B_0}$ from below by a fixed hermitian metric by invoking \cite[Theorem~7.5]{Phung}. That statement concerns a relatively compact subset of an arbitrary complex manifold, but its proof is deferred to \cite[Theorem~3]{Green}, which deals with complements of hypersurfaces in complex tori. We give a short self-contained proof of the comparison, in the generality actually needed here, based on Brody's reparametrization lemma and Hurwitz's theorem \cite[Lemmas~7.2.1 and~7.2.11]{NW}.
\item At one point the argument requires a path of uniformly bounded length joining two points of the same fibre \emph{within that fibre}. In \emph{loc.\ cit.}\ this length is bounded by the diameter of the closure of $\mathcal A_{B_0}$ in $\mathcal A$. This is not sufficient: two points of a fibre may be close to each other in $\mathcal A$ even though every path joining them inside the fibre is long. We bound the length instead by the diameter of the fibre itself, measured with respect to the metric induced on the fibre, and we check that these diameters are uniformly bounded.
\item Phung converts bounds on the lengths of certain loops in $\mathcal A_{B_0}$ into bounds on the displacements, along the fibre direction of the universal cover, of the corresponding deck transformations. This conversion needs justification: the metric pulled back to the universal cover is not a product metric, so the projection onto the fibre direction does not decrease lengths in general. We show that all the lifted paths in question stay in a fixed region, independent of $s$ and of the section, on which this projection is Lipschitz with a uniform constant.
\item Several routine reductions are carried out in detail: the case where $\pi_1(B_0)$ is trivial, the reduction to $t\ge1$, the enlargement of the disks $\overline\Delta_i$ absorbing the (countable) set of points of $B$ whose fibre in $\mathcal D$ fails to be hyperbolic, and the verification that the two conjugating paths naturally appearing in the construction differ by a loop contained in a fibre, so that the associated cocycles are cohomologous.
\end{itemize}
The statement and the strategy of Theorem~\ref{thm:main_phung} are entirely Phung's; the purpose of the details above is to make the argument self-contained in the form needed for \S\ref{sec:improved}.

\paragraph{Proof of Theorem \ref{thm:main_phung}.} Fix a hermitian metric $h$  on  $\mathcal A$. Put $k:=\rk(\pi_1(B_0))$, where we have implicitly fixed the base point $b_0\in B_0$ for $\pi_1(B_0)$.

\noindent\textit{The rank-zero case.} If $k=0$ then $\pi_1(B_0,b_0)$ is trivial, so $H^1(\pi_1(B_0,b_0),\Gamma)=0$ for the fibre lattice $\Gamma=H_1(A_{b_0},\mathbb Z)\cong\mathbb Z^{2n}$. By the Parshin--Phung injectivity (\cite[Proposition~1]{Parshin}, \cite[Proposition~7.2]{Phung}; cf.\ Proposition~\ref{prop:parshin-phung} below) the map $A(K)/\Tr_{K/\mathbb C}(A)(\mathbb C)\to H^1(\pi_1(B_0,b_0),\Gamma)=0$ is the zero homomorphism; hence every two points of $A(K)$ are congruent modulo $\Tr_{K/\mathbb C}(A)(\mathbb C)+A(K)_{\mathrm{tors}}$, so $A(K)/\Tr_{K/\mathbb C}(A)(\mathbb C)$ is finite of order $t_\mathcal A:=\#\bigl(A(K)/\Tr_{K/\mathbb C}(A)(\mathbb C)\bigr)_{\mathrm{tors}}<\infty$ (Lang--N\'eron). Consequently $\#\bigl(I(s,B_0)/\Tr_{K/\mathbb C}(A)(\mathbb C)\bigr)\le t_\mathcal A$ for every $s$, and the bounds of Theorem~\ref{thm:main_phung} and Theorem~\ref{thm:main} hold trivially (take $m\ge t_\mathcal A$). We therefore assume $k\ge 1$ from now on.

\noindent\textit{Reduction to $t\ge 1$.} As in the proof of \cite[Theorem~A]{Phung}, we may moreover assume $t\ge 1$: if $t=0$ (so that $\Sing(f)=\varnothing$ and $B_0=B$ is a compact hyperbolic surface of genus $g\ge 2$), remove one closed disk $\overline\Delta_1\subset B$. This shrinks $B_0$, so $I(s,B_0)$ only grows, and it leaves the rank unchanged, $\rk\pi_1(B\smallsetminus\overline\Delta_1)=2g+1-1=2g=\rk\pi_1(B)$; hence the bound for the new $B_0$ implies the bound for the old one. With $t\ge 1$ the surface $B_0$ has the homotopy type of a compact surface of genus $g$ with $t\ge 1$ boundary circles, so $\pi_1(B_0,b_0)$ is a \emph{free} group of rank $k=2g+t-1$; this freeness will be used in Steps~3 and~6, and it places us within the standing hypotheses of \S\ref{sec:growth}.\\

 \noindent\textit{Step 1: Reduction to the hyperbolic locus.} Consider the non-hyperbolic locus
\[
V:=\{b\in B\colon \mathcal D_b \text{ is not hyperbolic}\}
\]
It is well known that $V$ is a closed subset of $B$ in the analytic (Euclidean) topology: the hyperbolic locus of a proper holomorphic map between complex spaces is open (\cite[Theorem~7.2.15]{NW}, applied to $f|_{\mathcal D}\colon\mathcal D\to B$). Moreover, by \cite[Lemma~B.2]{Phung}, applied via the inclusion $V\subseteq Z(\mathcal A,\mathcal D)$ furnished by \cite[Theorem~7.6]{Phung}, and using crucially the assumption \texttt{(P)}(iii) that $D$ contains no translate of a nonzero abelian subvariety, the set $V$ is \emph{at most countable}.

We may therefore enlarge the disks $\overline\Delta_1,\dots,\overline\Delta_t$ so as to remove $V\cup\Sing(f)$ from the base. First, a collar of each $\partial\overline\Delta_i$ is foliated by parallel Jordan curves, and only countably many of these meet the countable set $V\cup\Sing(f)$; we may thus enlarge each $\overline\Delta_i$ inside its collar to a closed disk $\overline\Delta_i^+$ whose boundary avoids $V\cup\Sing(f)$, the $\overline\Delta_i^+$ remaining pairwise disjoint. In particular $\Sing(f)\subset\bigsqcup_i\operatorname{int}\overline\Delta_i^+$. The set $W:=(V\cup\Sing(f))\smallsetminus\bigsqcup_i\operatorname{int}\overline\Delta_i^+$ is closed and disjoint from the compact set $\bigsqcup_i\overline\Delta_i^+$, hence at positive $\rho$-distance $d_0$ from it. Applying \cite[Lemma~8.1]{Phung} to $W$ with disks of radius at most $\varepsilon$, for some $0<\varepsilon<\min\bigl\{d_0/2,\ \tfrac13\min_{i\ne j}d_\rho(\overline\Delta_i^+,\overline\Delta_j^+)\bigr\}$, yields finitely many pairwise disjoint closed cover disks whose union contains $W$ in its interior and each of which is disjoint from every $\overline\Delta_j^+$. Annex each cover disk $V'$ to some $\overline\Delta_i^+$ by setting $\overline\Delta_i':=\overline\Delta_i^+\cup\beta\cup V'$, where $\beta$ is a thin tube running from $\overline\Delta_i^+$ to $V'$, meeting each only in a short boundary arc at its respective end, and disjoint from all other disks and tubes. Each $\overline\Delta_i'$ is simply connected with piecewise-$C^1$ boundary, hence again a closed disk (in general not a metric ball), and the $\overline\Delta_i'$ are pairwise disjoint, with distinct points of $\Sing(f)$ still lying in distinct disks. By construction $V\cup\Sing(f)\subseteq\bigsqcup_{i=1}^t\operatorname{int}\overline\Delta_i'$. Writing $B_0':=B\smallsetminus\bigsqcup_{i=1}^t\overline\Delta_i'\subseteq B_0$, the surface $B_0'$ is a deformation retract of $B_0$ (one pushes the attached collars, tubes and cover disks out through $\partial\overline\Delta_i'$); in particular $\pi_1(B_0')=\pi_1(B_0)$, so $\rk\pi_1(B_0')=\rk\pi_1(B_0)=k$ is unchanged. Since $B_0'\subseteq B_0$ we have $I(s,B_0)\subseteq I(s,B_0')$ for every $s$, so it suffices to prove the bound for $B_0'$. Replacing $B_0$ by $B_0'$, we may thus assume from now on that $V\cup\Sing(f)\subseteq\bigsqcup_{i=1}^t\operatorname{int}\overline \Delta_i$; equivalently, the compact set $\overline{B_0}$ is disjoint from $V\cup\Sing(f)$. In particular $\mathcal D_{b}$ is Kobayashi hyperbolic for every $b\in \overline{B_0}$. By a result of Green (\cite[Theorem~7.3.8]{NW}, \cite[Theorem~7.6]{Phung}) it follows that $\mathcal A_b\setminus\mathcal D_b$ is Kobayashi hyperbolic for any $b\in \overline{B_0}$. Finally, fix a compact connected smooth subsurface $N\Subset B_0$ with $b_0\in N$ (it will be enlarged once and for all in Step~2, so as to contain two further fixed compact sets). Its intrinsic diameter $\delta_N:=\diam_\rho(N)$ is finite, so for any two points $x,y\in N$ we can find a path $c_{x,y}\subset N$ joining them and such that:
\[
\ell(c_{x,y})\le\delta_N<+\infty\,.
\]
We denote $\mathcal C_{b_0}:=\{c_{b_{0},p}\}_{p\in N}$\,.
\\

\noindent\textit{Step 2: Bounding loop lengths in the base.}
Let $\sigma_P\in I(s, B_0)$ and set $S:=f(\sigma_P(B_0)\cap\mathcal D)\cap B_0$, so that $\#S\le s$ and $\sigma_P(B_0\smallsetminus S)\subseteq(\mathcal A\smallsetminus\mathcal D)|_{B_0\smallsetminus S}$; we may assume $S\neq\emptyset$ (otherwise replace $S$ by an arbitrary singleton of $B_0$, which only increases the Kobayashi lengths below). Let $\alpha_1,\ldots,\alpha_k$ be a simple basis of $\pi_1(B_0,b_0)$. By \cite[Theorem 5.1]{Phung}, applied with $\mathcal C_{b_0}$ as the fixed collection of paths of \cite[Definition~3.2]{Phung} (possible since the base points $b$ produced there lie in $B_\varepsilon\subseteq N$, cf.\ below), there exist $b\in B_0$ and piecewise smooth loops $\gamma_j\in \Gamma^{\alpha_j}_S$ such that $\alpha_j=c_{b_0,b}^{-1}\,\gamma_j\,c_{b_0,b}$ in $\pi_1(B_0,b_0)$ where $c_{b_0,b}\in\mathcal C_{b_0}$ and the following bound holds:
\begin{equation}\label{eq:phung_lin_bound}
\ell_S(\gamma_j)\le L(s+1)\,\quad\forall j=1,\ldots, k
\end{equation}
for $L\in\mathbb R_{>0}$ independent of $s,S,b$. Two further facts, immediate from the proof of \cite[Theorem~5.1]{Phung}, will be needed in Step~5: the loops $\gamma_j$ lie in a fixed compact set $\mathcal K_1\Subset B_0$ and satisfy $\ell_\rho(\gamma_j)\le L_1$, with $\mathcal K_1$ and $L_1$ independent of $s,S,P$. Indeed, by \cite[Lemmas~3.7 and~4.1]{Phung} the $\gamma_j$ are obtained by modifying, within $\rho$-distance $2a$, finitely many fixed loops of bounded $\rho$-length, while keeping $\rho$-distance $\ge 2a$ from $\partial B_0$; the length bound follows from \cite[(5.2)--(5.3)]{Phung}. Moreover, the base points $b$ lie in the fixed compact set $B_\varepsilon\Subset B_0$ of \emph{loc.\ cit.} We enlarge $N$ once and for all so that $\mathcal K_1\cup B_\varepsilon\subseteq N$.

There exists $c\in\mathbb R_{>0}$ such that, at the infinitesimal level,
\begin{equation}\label{eq:metric_comparison}
\kappa_{(\mathcal A\smallsetminus\mathcal D)|_{B_0}}(x,v)\ge c\, |v|_h\qquad\forall\,(x,v)\,.
\end{equation}
This is \cite[Theorem~7.5]{Phung}, a criterion of Brody--Green type going back to \cite[Theorem~3]{Green}; as the proof is only indicated in \emph{loc.\ cit.}, we give the short argument. If no such $c$ exists then, by the definition of the Kobayashi--Royden metric (\S\ref{sec:kob_metric}), there are holomorphic disks $g_\nu\colon\Delta(0,1)\to(\mathcal A\smallsetminus\mathcal D)|_{B_0}$ with $|g_\nu'(0)|_h\to\infty$. Since $\mathcal A$ is compact, Brody's reparametrization lemma \cite[Lemma~7.2.1]{NW} produces a non-constant entire curve $F\colon\mathbb C\to\mathcal A$ which is a locally uniform limit of holomorphic maps with values in $(\mathcal A\smallsetminus\mathcal D)|_{B_0}$. In particular $f\circ F(\mathbb C)\subseteq\overline{B_0}$ omits the nonempty open set $\operatorname{int}\overline\Delta_1$, hence omits three points of $B$; so $f\circ F$ maps into the complement of three points of $B$, which is hyperbolic by uniformization, and $f\circ F$ is therefore constant. Hence $F(\mathbb C)\subseteq\mathcal A_b$ for some $b\in\overline{B_0}$, and $b\notin V\cup\Sing(f)$ by the reduction above. By Hurwitz's theorem in the form \cite[Lemma~7.2.11]{NW}, applied on an exhaustion of $\mathbb C$ by disks inside the complex manifold $\mathcal A|_{B\smallsetminus\Sing(f)}$, where $\mathcal D$ restricts to a reduced divisor, either $F(\mathbb C)\subseteq\mathcal D_b$ or $F(\mathbb C)\subseteq\mathcal A_b\smallsetminus\mathcal D_b$; both are Kobayashi hyperbolic since $b\notin V$ (Step~1), so $F$ is constant --- a contradiction.

Now notice that 
\[
\sigma_P(\gamma_j)\subset \sigma_P(B_0\smallsetminus S)\subseteq(\mathcal A\smallsetminus\mathcal D)|_{B_0\smallsetminus S}
\]
So by \Crefrange{eq:phung_lin_bound}{eq:metric_comparison} and the decreasing property of the Kobayashi-Royden metric we deduce
\begin{equation}\label{eq:phung_looplength}
\ell_{h}(\sigma_P(\gamma_j))\le c^{-1}L(s+1)\,.
\end{equation}

\medskip\noindent\textit{Step 3: The Parshin cocycle.} Since $f\colon\mathcal A_{B_0}\to B_0$ is a proper smooth submersion, Ehresmann's theorem gives a fiber bundle in the differentiable category. Let $\sigma_O$ be the zero section and fix $w_0:=\sigma_O(b_0)\in \mathcal A_{b_0}$. Since $B_0$ is a $K(\pi,1)$-space (i.e.\ $\pi_i(B_0)=0$ for $i\ge 2$, which holds because its universal cover is the disc $\mathbb D$), the long exact sequence of homotopy groups for the fibration $\mathcal A_{b_0}\to\mathcal A_{B_0}\to B_0$ yields the short exact sequence
\begin{equation}\label{eq:ses}
1\longrightarrow \pi_1(\mathcal A_{b_0},w_0)\longrightarrow\pi_1(\mathcal A_{B_0},w_0)\xrightarrow{\;f_*\;}\pi_1(B_0,b_0)\longrightarrow 1\,.
\end{equation}
As $\mathcal A_{b_0}$ is a complex torus of dimension $n$, we have $\pi_1(\mathcal A_{b_0},w_0)=H_1(\mathcal A_{b_0},\mathbb Z)=:\Gamma\cong\mathbb Z^{2n}$. The zero section $\sigma_O$ splits~\eqref{eq:ses} via $i_O(\alpha):=[\sigma_O(\gamma)]$ where $\alpha=[\gamma]\in G:=\pi_1(B_0,b_0)$. Each rational point $P\in A(K)$ with section $\sigma_P\colon B_0\to\mathcal A_{B_0}$ gives a second splitting
\[
i_P(\alpha):=[\eta_{w_0,\sigma_P(b_0)}^{-1}\circ\sigma_P(\gamma)\circ\eta_{w_0,\sigma_P(b_0)}]\,,
\]
where $\eta_{w_0,\sigma_P(b_0)}$ is a path in the fiber $\mathcal A_{b_0}$ from $w_0$ to $\sigma_P(b_0)$. Since both $i_P$ and $i_O$ are sections of $f_*$, for every $\alpha\in G$:
\[
f_*\bigl(i_P(\alpha)\cdot i_O(\alpha)^{-1}\bigr)=\alpha\cdot\alpha^{-1}=1\,,
\]
so $i_P(\alpha)\cdot i_O(\alpha)^{-1}\in\ker(f_*)=\Gamma$. We define the \emph{Parshin cocycle}
\[
c_P\colon G\to\Gamma\,,\quad c_P(\alpha):=i_P(\alpha)\cdot i_O(\alpha)^{-1}\,.
\]
One verifies that $c_P$ is a $1$-cocycle of $G$ with coefficients in the $G$-module $\Gamma$, where the $G$-action on $\Gamma$ is  by conjugation
\[
\begin{aligned}
\varphi\colon G&\to \Aut(\Gamma)\\
\alpha&\mapsto \varphi_\alpha\colon\gamma \mapsto i_O(\alpha)\,\gamma\,i_O(\alpha)^{-1}
\end{aligned}
\]
Note that the action $\varphi$ doesn't depend on the choice of the splitting $i_O$, in fact for another splitting $i_P$, we have $i_P(\alpha)=c_P(\alpha)\cdot i_O(\alpha)$ with $c_P(\alpha)\in\Gamma$, and
\[
i_P(\alpha)\,\gamma\,i_P(\alpha)^{-1}
=c_P(\alpha)\cdot\varphi_\alpha(\gamma)\cdot c_P(\alpha)^{-1}
=\varphi_\alpha(\gamma)\,,
\]
where the last equality uses the commutativity of $\Gamma$. The cohomology class  $[c_P]\in H^1(G,\Gamma)$ is independent of the choice of path $\eta_{w_0,\sigma_P(b_0)}$ (a different choice changes $c_P$ by a coboundary). The following result proved in \cite[Proposition 1]{Parshin}  and \cite[Proposition 7.2]{Phung} will be crucial:

\begin{prop}[Parshin-Phung] \label{prop:parshin-phung}
  The homomorphism
  \[
  \begin{aligned}
    \Psi\colon A(K)&\to H^1(G,\Gamma)\\
    P&\mapsto [c_P]
    \end{aligned}
    \]
factors through $A(K)/\mathrm{Tr}_{K/\mathbb{C}}(A)(\mathbb{C})$ and has the following injectivity property: if $\Psi(P) = \Psi(Q)$, then $P - Q \in \mathrm{Tr}_{K/\mathbb{C}}(A)(\mathbb{C}) + A(K)_{\mathrm{tors}}$. In particular, each element $[c] \in H^1(G, \Gamma)$ in the image of~$\Psi$ arises from at most
$t_\mathcal{A} := \#\bigl(A(K)/\mathrm{Tr}_{K/\mathbb{C}}(A)(\mathbb{C}) \bigr)_{\mathrm{tors}} < \infty$ rational points modulo the trace (finite by the Lang--N\'eron theorem).
\end{prop}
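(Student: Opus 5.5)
The statement to prove is Proposition~\ref{prop:parshin-phung}. The plan is to follow Parshin's argument, which has three parts: $\Psi$ is a homomorphism, it kills the trace, and its kernel is contained in trace plus torsion.

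\emph{Homomorphism.} Translation by $\sigma_P$ is a fibrewise automorphism of $\mathcal A_{B_0}$. In the universal cover of each fibre it acts as addition, so the fibrewise group law lets us write the loop $\sigma_{P+Q}(\gamma)$, up to homotopy, as the fibrewise sum of $\sigma_P(\gamma)$ and $\sigma_Q(\gamma)$. Concretely, $c_{P}(\alpha)$ can be read as the class in $\Gamma=H_1(A_{b_0},\mathbb Z)$ obtained by transporting $\sigma_P(b_0)$ around $\alpha$ using the Gauss--Manin flat structure on the local system $\mathcal A_{B_0}\to B_0$ of real tori. Additivity $c_{P+Q}=c_P+c_Q$ then follows from the fact that the flat structure is compatible with the group law.

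\emph{Kills the trace.} Let $P$ come from $\Tr_{K/\mathbb C}(A)(\mathbb C)$. Its section lies in the image of the constant family $\tau\times B_0\to\mathcal A_{B_0}$, and the constant point $\tau(x)$ gives a flat section. Hence $\sigma_P(\gamma)$ is homotopic to $\sigma_O(\gamma)$ translated by a fixed fibre path. This makes $c_P$ a coboundary, namely $\alpha\mapsto \varphi_\alpha(v)-v$.

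\emph{Kernel.} Suppose $[c_P]=0$. After translating by the coboundary, choose a lift of $\sigma_P$ to the universal cover $\widetilde{\mathcal A}=(\text{fibrewise }\mathbb C^n)\times\mathbb D$ that is invariant under $G$, so that $\sigma_P$ lifts to a section of the flat real vector bundle $H_1(A_b,\mathbb R)$ over $B_0$. Using the Hodge decomposition, this real-analytic section is the image of a holomorphic section of $\mathrm{Lie}(A)$. The topological vanishing gives a lift to the bundle of tangent spaces $\mathcal V=\mathrm{Lie}$, which is holomorphic and invariant under the monodromy. Its image in $H_1(\cdot,\mathbb R)$ is flat, hence a monodromy-invariant flat section.

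This is the main obstacle. One must show, via Deligne's theorem of the fixed part, that the flat monodromy-invariant part $H^0(G,H_1(\mathbb R))$ corresponds exactly to the trace. Here $B_0$ is not compact, but it is the complement of disks in $B$, and the variation extends across the disks except at $\Sing(f)$. So I would apply the theorem of the fixed part on $B\smallsetminus\Sing(f)$: restriction to $B_0$ has the same monodromy after the enlargement, since $\pi_1(B_0)\to\pi_1(B\smallsetminus \Sing f)$ is surjective. The conclusion is that the invariant flat sub-VHS is constant and gives an abelian subvariety isogenous to $\Tr(A)\otimes K$. Then $P$ lies in $\Tr(A)(\mathbb C)$ modulo an element with zero lift, i.e.\ modulo torsion (from the isogeny and the lattice discreteness).

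Finally, for the count of points per class: $A(K)/\Tr$ is finitely generated by Lang--N\'eron. Its torsion subgroup has order $t_\mathcal A$, and the fibre of $\Psi$ over a class consists of a coset of (trace $+$ torsion)/trace, which has at most $t_\mathcal A$ elements.
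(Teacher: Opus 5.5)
The paper does not prove this proposition itself; it quotes \cite[Proposition~1]{Parshin} and \cite[Proposition~7.2]{Phung}. Your homomorphism, trace and counting steps are acceptable in outline. One small correction in the trace step: the cocycle is in fact identically zero, because $v=\tau_*(\tilde x)$ is monodromy invariant. For a real $v$, the map $\alpha\mapsto\varphi_\alpha(v)-v$ would only be a coboundary after tensoring with $\mathbb R$, not in $H^1(G,\Gamma)$.

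\textbf{The gap.} The genuine gap is in the kernel step, at the sentence ``Its image in $H_1(\cdot,\mathbb R)$ is flat''. Vanishing of $[c_P]$ only gives a \emph{single-valued} real section $s_P$ of the local system $\mathcal H_{\mathbb R}=H_1(A_b,\mathbb R)$ whose image in $\mathrm{Lie}$ is holomorphic. Compatibility with the monodromy is not horizontality: holomorphy of the image only says $\nabla_{\bar z}s_P\in F^0=H^{0,-1}$.

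On $B_0$ alone this cannot force $\nabla s_P=0$. $B_0$ is an open Riemann surface, hence Stein, so $\mathrm{Lie}(\mathcal A_{B_0}/B_0)$ has an infinite-dimensional space of holomorphic sections, whereas the flat ones form a space of real dimension at most $2n$. Exponentiating a non-flat one gives a holomorphic section of $\mathcal A_{B_0}\to B_0$ with vanishing class that does not come from the trace. So flatness is exactly where you must use that $\sigma_P$ comes from a point of $A(K)$, i.e.\ is defined on all of $B$, and your argument never uses this. The theorem of the fixed part, which you single out as the main obstacle, only enters afterwards, to identify the flat invariant part with the trace.

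\textbf{How to close it.} Put $U:=B\smallsetminus\Sing(f)$. Since $\pi_1(B_0)\to\pi_1(U)$ is surjective and $c_P$ is the pull-back of the analogous cocycle on $\pi_1(U)$, inflation on $H^1$ is injective. Hence the class also vanishes over $U$, and $s_P$ extends to a single-valued real section over $U$.

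Now set $\beta:=\nabla_{\bar z}s_P\in H^{0,-1}$, so that $\nabla_z s_P=\bar\beta$ because $s_P$ is real. Flatness of $\nabla$ and of the polarization $Q$ give
\[
d\,Q(s_P,\nabla s_P)=2\,Q(\bar\beta,\beta)\,dz\wedge d\bar z\,.
\]
By the Riemann bilinear relations this $2$-form has constant sign and vanishes only where $\beta=0$. Stokes' theorem then forces $\beta=0$ when $\Sing(f)=\varnothing$.

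In general you need asymptotic control of $s_P$ near the points of $\Sing(f)$, after a finite base change making the local monodromies unipotent, to kill the boundary terms. Alternatively, invoke the theorem of the fixed part for the admissible variation of mixed Hodge structure attached to the $1$-motive $[\mathbb Z\to A]$, $1\mapsto P$. Only after flatness is established does your fixed-part argument place $\sigma_P$ in $\Tr_{K/\mathbb C}(A)(\mathbb C)$, up to the torsion allowed in the statement.
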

\begin{rem}
It would be interesting to understand the relationship between the Parshin cocycle and the ``analytic cocycle'' induced by the logarithm of the section $\sigma_P$, as defined in \cite[Equation (13)]{DT}.
\end{rem}
To extract a quantitative bound from the cocycle $c_P$, we work in the semidirect product coordinates provided by $i_O$. The splitting $i_O$ exhibits
\begin{equation}\label{eq:semidirect}
\pi_1(\mathcal A_{B_0},w_0)\;\cong\;\Gamma\rtimes_\varphi G\,.
\end{equation}
In these coordinates, $i_O(\alpha_j)=(0,\alpha_j)$ and $i_P(\alpha_j)=(\beta_j,\alpha_j)$ for some $\beta_j\in\Gamma$, so the Parshin cocycle reads $c_P(\alpha_j)=\beta_j$. Since $G$ is free, the $k$-tuple $(\beta_1,\ldots,\beta_k)\in\Gamma^k$ determines $c_P$ completely. The displacement bound in Step~4 will give a norm bound on each $\beta_j$.

\medskip\noindent\textit{Step 4: Bounding loop lengths in $\mathcal A$.} Let us now go back to the special loops (based in $b$) $\gamma_j\in\Gamma_S^{\alpha_j}$  for $j=1,\ldots, k$  considered in Step 2. Consider the following loop based at $w_0$:
\[
\sigma_P(\gamma_j)^\#:=( v_b\circ\sigma_O(c_{b_0,b}))^{-1}\circ \sigma_P(\gamma_j)\circ( v_b\circ\sigma_O(c_{b_0,b}))
\]
where $v_b\subset\mathcal A_b$ is a $h$-geodesic from $\sigma_O(b)$ to $\sigma_P(b)$. Since $\sigma_P(\gamma_j)=\sigma_{P}(c_{b_0,b})\circ\sigma_P(\alpha_j)\circ\sigma_{P}(c_{b_0,b})^{-1}$, $\sigma_P(\gamma_j)^\#$ is free homotopic to $\sigma_{P}(\alpha_j)$ and each of its components satisfies the following  uniform bounds:
\begin{itemize}
\item the loop $\sigma_P(\gamma_j)$ satisfies:
\[
\ell_h(\sigma_P(\gamma_j))\le c^{-1} L(s+1)
\]
by \Cref{eq:phung_looplength};
\item the fiber path $v_b\subset\mathcal A_{b}$ satisfies:
\[
\ell_h(v_b)\le\delta_0:=\sup_{b\in\overline{B_0}}\diam_h(\mathcal A_b)<+\infty\;;
\]
(here $\diam_h(\mathcal A_b)$ is the \emph{intrinsic} $h$-diameter of the fibre, which bounds the length of a fibre geodesic; the supremum is finite because $\overline{B_0}\cap\Sing(f)=\varnothing$ by Step~1, so $(\mathcal A_b)_{b\in\overline{B_0}}$ is a compact smooth family and $b\mapsto\diam_h(\mathcal A_b)$ is continuous)
\item since $B$ is compact $\Vert d\sigma_O \Vert_{\infty}:=\sup_{b\in B}\Vert d_b\sigma_O\Vert<+\infty$, so
\[
\ell_h(\sigma_O(c_{b_0,b}))\le \Vert d\sigma_O \Vert_{\infty}\,\ell(c_{b_0,b})\le\Vert d\sigma_O \Vert_{\infty}\,\delta_N=:\delta_0'
\]
\end{itemize}
In other words for any element of the simple basis $\alpha_j$ we can construct a loop $\left[\sigma_P(\gamma_j)^\#\right]\in\pi_1(\mathcal A_{B_0},w_0)$ that is free homotopic to $\sigma_P(\alpha_j)$ and satisfies
\[
\ell_h\left(\sigma_P(\gamma_j)^\#\right)\le H(s):=c^{-1}L(s+1)+2(\delta_0+\delta_0')
\]
where the constants $L,\delta_0,\delta_0'$ don't depend on $S$ and $P$.\\

\medskip
\noindent
The loop $\sigma_P(\gamma_j)^\#$ is based at~$w_0$, so it defines an element
$[\sigma_P(\gamma_j)^\#] \in \pi_1(\mathcal{A}_{B_0}, w_0)$.
This element need not equal $i_P(\alpha_j) = (\beta_j, \alpha_j)$, because
$\sigma_P(\gamma_j)^\#$ and the loop defining~$i_P(\alpha_j)$ use different
conjugating paths from~$w_0$ to the fibre above~$b$: the former uses
$v_b \circ \sigma_O(c_{b_0,b})$, while the latter uses
$\sigma_P(c_{b_0,b}) \circ \eta_{w_0,\sigma_P(b_0)}$.  Both are paths from~$w_0$ to~$\sigma_P(b)$, so
their concatenation
\[
  \omega
  \;:=\;
  \bigl(v_b \circ \sigma_O(c_{b_0,b})\bigr)^{-1}
  \circ\;
  \sigma_P(c_{b_0,b})
  \circ\;
  \eta_{w_0,\sigma_P(b_0)}
\]
is a loop at~$w_0$ with $[\omega]\in\Gamma$: indeed $f_*[\omega]=1$, since $f\circ v_b$ and $f\circ\eta_{w_0,\sigma_P(b_0)}$ are constant while $f\circ\sigma_O(c_{b_0,b})=f\circ\sigma_P(c_{b_0,b})=c_{b_0,b}$. A direct verification gives
\[
  [\sigma_P(\gamma_j)^\#]
  \;=\;
  [\omega]^{-1} \cdot i_P(\alpha_j) \cdot [\omega]
  \qquad\text{in } \pi_1(\mathcal{A}_{B_0}, w_0)\,.
\]
In other words, $[\sigma_P(\gamma_j)^\#]$ and $i_P(\alpha_j)$ are
\emph{conjugate} in $\pi_1(\mathcal{A}_{B_0}, w_0)$.  The map
$i_P' \colon G \to \pi_1(\mathcal{A}_{B_0}, w_0)$ defined by
$i_P'(\alpha_j) := [\sigma_P(\gamma_j)^\#]$ is therefore a section of~\eqref{eq:ses} (conjugation by $[\omega]\in\Gamma=\ker f_*$ preserves the section property), $\Gamma$-conjugate to~$i_P$, so the associated cocycles are
cohomologous: $[c'] = [c_P] \in H^1(G, \Gamma)$.  By
Proposition~\ref{prop:parshin-phung}, we may freely replace~$i_P$
by~$i_P'$ for counting purposes.  We do so and, to lighten notation, simply
write~$\beta_j$ for the components of the new cocycle.

\medskip
\noindent
\textit{Step 5: Displacement bound via the universal cover.}
We now explain how the $h$-length bound on $\sigma_P(\gamma_j)^\#$ constrains
the lattice element $\beta_j \in \Gamma$. Let $\pi\colon \widetilde{\mathcal A}_0\to\mathcal A_{B_0}$ be the universal cover. Since $\mathcal{A}_{B_0} \to B_0$ is a fibre bundle (by Ehresmann's theorem)
with fibre $\mathcal A_{b_0}$ (a complex torus of dimension~$n$, so
$\widetilde{\mathcal A_{b_0}} \cong \mathbb{R}^{2n}$) and base~$B_0$ (a hyperbolic
Riemann surface, so $\widetilde{B_0} \cong \mathbb D$, the unit disk), pulling
back to~$\mathbb D$ trivialises the bundle and gives
$\widetilde{\mathcal{A}}_0 \cong \mathbb{R}^{2n} \times \mathbb D$. We use the
trivialisation induced by the relative exponential (cf.\ \cite[(7.3)]{Phung}):
the lattice local system $(R^1f_*\mathbb Z)^\vee$ is trivial over the
contractible~$\mathbb D$, and with this choice the subgroup
$\Gamma\subset\pi_1(\mathcal A_{B_0},w_0)$ acts on
$\mathbb R^{2n}\times\mathbb D$ by $(\tilde x,\tilde y)\mapsto(\tilde x+\gamma,\tilde y)$.
 
Fix the point $\tilde{w} = (\tilde{x}_0, \tilde{y}_0) \in \pi^{-1}(w_0)$
with $\tilde x_0=0$: this choice is possible because
$w_0=\sigma_O(b_0)$ lies on the zero section, whose lift in the chosen
trivialisation is $\{0\}\times\mathbb D$, and $\tilde y_0$ is any point
of $\mathbb D$ lying over $b_0$ for the universal covering
$v\colon\mathbb D\to B_0$.
Pull back the hermitian metric~$h$ to a Riemannian metric~$\tilde{h}$
on~$\widetilde{\mathcal{A}}_0$.
 The group of deck transformations of~$\pi$ is
$\pi_1(\mathcal{A}_{B_0}, w_0) \cong \Gamma \rtimes_\varphi G$ (with the
composition convention for $\pi_1$ already implicit in our loop
concatenations). The action of an element
$(\beta, \alpha)=\beta\cdot i_O(\alpha) \in \Gamma \rtimes_\varphi G$ on
$\mathbb R^{2n}\times\mathbb D$ is given by (homotopy lifting property,
cf.\ the proof of \cite[Theorem~A]{Phung}):
\begin{equation}\label{eq:deck-action-general}
  (\beta, \alpha) \cdot (\tilde{x}, \tilde{y})
  \;=\;
  \bigl(\varphi_\alpha(\tilde x)+\beta,\;\;
         \alpha \cdot \tilde{y}\bigr)\,,
\end{equation}
where $\alpha \cdot \tilde{y}$ is the deck transformation of
$v \colon \mathbb D \to B_0$ by $\alpha \in G = \pi_1(B_0, b_0)$ (acting on
the base), and $\varphi_\alpha$ acts on the fibre factor
$\mathbb R^{2n}$ as the linear extension of the monodromy: the fibre
component ``twists'' as one transports along the base loop~$\alpha$, and
the semidirect product structure of $\Gamma \rtimes_\varphi G$ encodes
precisely this twisting. In particular the fibre component of the action
is a translation only when $\varphi_\alpha=\mathrm{id}$; at the chosen
point $\tilde w=(\tilde x_0,\tilde y_0)$, however, thanks to the
normalisation $\tilde x_0=0$, \eqref{eq:deck-action-general} reads
\begin{equation}\label{eq:deck-action}
  (\beta, \alpha) \cdot (\tilde{x}_0, \tilde{y}_0)
  \;=\;
  \bigl(\beta \cdot \tilde{x}_0,\;\;
         \alpha \cdot \tilde{y}_0\bigr)\,,
\end{equation}
where $\beta\cdot\tilde x_0$ is the deck transformation of
$u \colon \mathbb{R}^{2n} \to \mathcal A_{b_0}$ by the element
$\beta \in \Gamma$ (acting on the fibre).

The loop $\sigma_P(\gamma_j)^\#$ is based at~$w_0$ and has homotopy class
$(\beta_j, \alpha_j)$.  Its lift to~$\widetilde{\mathcal{A}}_0$
from~$\tilde{w}$ is a path from~$\tilde{w}$ to
$(\beta_j, \alpha_j) \cdot \tilde{w}$.  Since~$\pi$ is a local isometry for
the metrics $\tilde{h}$ and~$h$, the $\tilde{h}$-length of the lift equals
$\ell_h(\sigma_P(\gamma_j)^\#) \leq H(s)$, so:
\begin{equation}\label{eq:displacement}
  d_{\tilde{h}}\!\bigl(\tilde{w},\;
    (\beta_j, \alpha_j) \cdot \tilde{w}\bigr)
  \;\leq\;
  H(s) \;:=\; c^{-1}L(s+1) + 2(\delta_0 + \delta_0')
  \;=\; O(s)\,,
\end{equation}
where $L$, $\delta_0$, $\delta_0'$ are independent of~$S$ and~$P$. Let
$d_j$ be the $\Gamma$-invariant geodesic metric on the fibre factor
$\mathbb R^{2n}$ obtained by restricting $\tilde h$ to the slice
$\mathbb R^{2n}\times\{\alpha_j\cdot\tilde y_0\}$; its quotient by the
translation action of $\Gamma\cong\mathbb Z^{2n}$ is the compact torus
$\mathcal A_{b_0}$. Let
$\mathrm{pr}_1\colon\mathbb R^{2n}\times\mathbb D\to\mathbb R^{2n}$ be the
projection; by~\eqref{eq:deck-action}, $\mathrm{pr}_1(\tilde w)=\tilde x_0$
and $\mathrm{pr}_1((\beta_j,\alpha_j)\cdot\tilde w)=\beta_j\cdot\tilde x_0$.

The $\mathbb D$-component of the lift $\tilde c$ of $\sigma_P(\gamma_j)^\#$
from $\tilde w$ is the lift, from $\tilde y_0$, of the base loop
$f\bigl(\sigma_P(\gamma_j)^\#\bigr)=c_{b_0,b}^{-1}\circ\gamma_j\circ c_{b_0,b}$,
which by Step~2 has image in $N$ and $\rho$-length at most
$L_2:=L_1+2\delta_N$, uniformly in $s,P$. Hence this component is a path of
$\tilde\rho$-length $\le L_2$ starting at $\tilde y_0$ and contained in
$v^{-1}(N)$, so it remains in the closed ball
\[
\Omega:=\overline B_{v^{-1}(N)}(\tilde y_0,L_2)
\]
of radius $L_2$ around $\tilde y_0$ in $v^{-1}(N)$ for the induced length
metric. The space $(v^{-1}(N),\tilde\rho)$ is a covering of the compact
length space $N$ with the lifted length metric, hence complete and locally
compact, hence a proper metric space by the Hopf--Rinow theorem for length
spaces; therefore $\Omega$ is compact. Note that $\Omega$ is independent of
$s$ and~$P$, and $\tilde c\subset\mathbb R^{2n}\times\Omega$.

The ratio $|\mathrm{pr}_{1*}\xi|_{d_j}/|\xi|_{\tilde h}$, for
$0\neq\xi\in T(\mathbb R^{2n}\times\Omega)$, is invariant under~$\Gamma$: every
$\gamma\in\Gamma$ is a $\tilde h$-isometry and satisfies
$\mathrm{pr}_1\circ\gamma=\tau_\gamma\circ\mathrm{pr}_1$, with $\tau_\gamma$
the $d_j$-isometric translation by~$\gamma$. (The analogous equivariance fails
for a general deck element $(\beta,\alpha)$, whose fibre component has linear
part $\varphi_\alpha$ by \eqref{eq:deck-action-general}; this is why we restrict to the $\Gamma$-invariant
region $\mathbb R^{2n}\times\Omega$.) The ratio therefore descends to a
continuous function on the unit sphere bundle over the compact set
$\mathcal A_{b_0}\times\Omega$, hence it is bounded by some $C_{h,j}\ge 1$; we
set $C_h:=\max_{1\le j\le k}C_{h,j}$, which depends only on $\mathcal A$, $h$,
$\Omega$ and the basis, uniformly in $s,P$. Therefore $\mathrm{pr}_1$ is
$C_h$-Lipschitz from $(\mathbb R^{2n}\times\Omega,\tilde h)$ to
$(\mathbb R^{2n},d_j)$, so $\mathrm{pr}_1\circ\tilde c$ has
$d_j$-length $\le C_h\,\ell_{\tilde h}(\tilde c)$, and
\begin{equation}\label{eq:fibre-displacement}
  d_j\!\bigl(\beta_j \cdot \tilde{x}_0,\;
    \tilde{x}_0\bigr)
  \;\leq\;
  C_h\,\ell_{\tilde h}(\tilde c)
  \;=\;C_h\,\ell_h\bigl(\sigma_P(\gamma_j)^\#\bigr)
  \;\leq\; C_h\,H(s)\,.
\end{equation}
 
\begin{rem}\label{rmk:displacement}
  The intermediate bound~\eqref{eq:displacement} controls the displacement in
  the \emph{full} universal cover
  $\widetilde{\mathcal{A}}_0 \cong \mathbb{R}^{2n} \times \mathbb D$, involving
  both fibre and base directions (and the metric~$\tilde{h}$ is not a product
  metric due to the monodromy).  The projection
  step~\eqref{eq:fibre-displacement} extracts a purely fibre-theoretic bound,
  which is what we need for the lattice count in the next step.
\end{rem}

\medskip\noindent
\textit{Step~6: Counting.}
For each $j = 1, \ldots, k$, we count the elements $\beta \in \Gamma$
compatible with~\eqref{eq:fibre-displacement}:
\begin{equation}\label{eq:count-set}
  N_j(s)
  \;:=\;
  \#\Bigl\{\beta \in \Gamma :\;
    d_j\!\bigl(\beta \cdot \tilde{x}_0,\;
      \tilde{x}_0\bigr)
    \leq C_h H(s)
  \Bigr\}\,.
\end{equation}
Via the universal covering $u \colon \mathbb{R}^{2n} \to \mathcal A_{b_0}$, the
torus~$A_{b_0}$ becomes a compact geodesic Riemannian manifold with the
metric~$d_j$, and $\Gamma \cong \mathbb{Z}^{2n}$ acts
on~$(\mathbb{R}^{2n}, d_j)$ by deck transformations.  The set
in~\eqref{eq:count-set} is in natural bijection with the set of
$\Gamma$-translates of~$\tilde{x}_0$ lying in the $d_j$-ball of
radius~$C_h H(s)$ around~$\tilde x_0$.

By the fundamental lemma of the geometry of groups
(cf.~\cite[Proposition~A.8 and Lemma~A.9]{Phung}), since
$\Gamma \cong \mathbb{Z}^{2n}$ is a lattice of rank~$2n$ acting cocompactly
on~$(\mathbb{R}^{2n}, d_j)$, there exists $m_j > 0$ (depending
on~$\mathcal{A}$, $h$, $\alpha_j$, but not on~$s$) such that
\begin{equation}\label{eq:lattice-bound}
  N_j(s)
  \;\leq\;
  m_j\,(C_h H(s)+1)^{2n}\,.
\end{equation}
The exponent~$2n$ reflects the rank of $\Gamma \cong \mathbb{Z}^{2n}$: in a
ball of radius~$R$ in the universal cover of the $2n$-dimensional real
torus~$\mathcal A_{b_0}$, the number of $\Gamma$-translates of~$\tilde{x}_0$ grows
as~$R^{2n}$.

Since $G$ is free on $\alpha_1, \ldots, \alpha_k$, the cohomology
class $[c_P] \in H^1(G, \Gamma)$ is completely determined by the $k$-tuple
$(\beta_1, \ldots, \beta_k) \in \Gamma^k$.  Bounding each component
independently via~\eqref{eq:lattice-bound} with $R = C_h H(s)$:
\begin{equation}\label{eq:total-splittings}
  \#\{\text{possible cohomology classes } [c_P]\}
  \;\leq\;
  \prod_{j=1}^{k} m_j\,(C_h H(s)+1)^{2n}
  \;=\;
  m_0\,(C_h H(s)+1)^{2nk}\,,
\end{equation}
where $m_0 := \prod_{j=1}^k m_j$.

By Proposition~\ref{prop:parshin-phung}, each class
$[c_P] \in H^1(G, \Gamma)$ arises from at most
$t_\mathcal{A} :=
  \#\bigl(A(K)/\mathrm{Tr}_{K/\mathbb{C}}(A)(\mathbb{C})\bigr)_{\mathrm{tors}}
  < \infty$
rational points modulo the trace (finite by the Lang--N\'eron theorem).
Therefore:
\[
  \#\left( \faktor{I(s,B_0)}{\Tr_{K/\CC}(A)(\mathbb C)}\right)
  \;\leq\;
  t_\mathcal{A} \cdot m_0\,(C_h H(s)+1)^{2nk}
  \;\leq\;
  m\,(s+1)^{2nk}\,,
\]
where
$
  m := t_\mathcal{A} \cdot m_0 \cdot
  \bigl(C_h(c^{-1}L + 2(\delta_0 + \delta_0')) + 1\bigr)^{2nk}
$
depends only on~$B_0$, $\mathcal{A}$, $D$, and~$h$. \qed

\subsection{Improved bound}\label{sec:improved}
We now prove a sharper version of \Cref{thm:main_phung}. The general
strategy is unchanged: we follow the same six-step scheme (reduction to the
hyperbolic locus, loop construction, Parshin cocycle, lifting to
$\mathcal A$, displacement in the universal cover, lattice counting). The
only modification occurs in Steps~2 and~4, where we replace Phung's linear
bound $\ell_S(\gamma_j)=O(s)$ with the sublinear bound
$O(\sqrt{(s+1)\log(s+2)})$ provided by
Proposition~\ref{prop:common-base}. This propagates through the counting
argument and halves the exponent.

\paragraph{Proof of \Cref{thm:main}.}
The proof follows the same six-step scheme as the proof of
\Cref{thm:main_phung}; in particular we may again assume $k\ge1$, the case
$k=0$ having been settled there.  The only modification is in Step~2: we replace the
appeal to \cite[Theorem~5.1]{Phung}, which provides loops of Kobayashi
length $O(s)$, with Proposition~\ref{prop:common-base}, applied to the set
$S:=f(\sigma_P(B_0)\cap\mathcal D)\cap B_0$ (as in Step~2, we may assume
$S\neq\emptyset$; note $\#S\le s$, and since the bound below is
increasing in the cardinality we may state it with $s$ in place
of~$\#S$), which provides
a point $b\in B_0\setminus S$, a path $\sigma$ from $b_0$ to~$b$ in~$B_0$
with $\ell_\rho(\sigma)\le\delta'$, and loops
$\gamma_1,\ldots,\gamma_k\subset B_0\setminus S$ based at~$b$ with
$[\sigma^{-1}\circ\gamma_j\circ\sigma]=\alpha_j$ in $\pi_1(B_0,b_0)$ and
\[
\ell_S(\gamma_j)\;\le\;C\sqrt{(s+1)\log(s+2)}
\qquad\text{for all }j=1,\ldots,k\,.
\]
Since the loops share a common base point~$b$ and the conjugation uses
the single path~$\sigma$, Steps~3--5 apply verbatim with the conjugating
path $v_b\circ\sigma_O(\sigma)$ in~$\mathcal A_{B_0}$ (in place of
$v_b\circ\sigma_O(c_{b_0,b})$ in \Cref{thm:main_phung}), whose
$h$-length is bounded by $\delta_0+\Vert d\sigma_O\Vert_\infty\delta'$,
independently of~$s$.  The compactness input of Step~5 is again available:
by the last assertion of Proposition~\ref{prop:common-base}, the base loop
$\sigma^{-1}\circ\gamma_j\circ\sigma$ has image in the fixed compact set
$\mathcal K_0\Subset B_0$ and $\rho$-length at most $L_0+2\delta'$,
uniformly in $s,P$; enlarging $N$ so that $\mathcal K_0\subseteq N$, the
compact set $\Omega$ of Step~5 (with $L_2:=L_0+2\delta'$) is independent of
$s,P$.  The displacement bound becomes
\[
d_j\bigl(\beta_j\cdot\tilde x_0,\;\tilde x_0\bigr)
\;\le\;
C_h\,H(s)\,,\qquad
H(s)\;:=\;c^{-1}C\sqrt{(s+1)\log(s+2)}
+2(\delta_0+\Vert d\sigma_O\Vert_\infty\delta')\,,
\]
with $C_h\ge 1$ the constant of Step~5.
The lattice counting in Step~6 then gives
\[
\#\left(\faktor{I(s,B_0)}{\Tr_{K/\CC}(A)(\CC)}\right)
\;\le\;
t_{\mathcal A}m_0\,(C_h H(s)+1)^{2nk}\,.
\]
It remains to extract the exponent.  Since $C_h\ge 1$, for $s$ large enough
\[
C_h H(s)+1\;\le\;C_h\,c^{-1}(C+1)\sqrt{(s+1)\log(s+2)}\,,
\]
hence $(C_h H(s)+1)^{2n}\le\bigl(C_h c^{-1}(C+1)\bigr)^{2n}\,
\bigl((s+1)\log(s+2)\bigr)^{n}$.
Fix $\varepsilon'>0$.  Since $\log(s+2)\le(s+1)^{\varepsilon'/n}$ for $s$
sufficiently large, there exists $s_0=s_0(\varepsilon')\ge 0$ such that for
all $s\ge s_0$:
\[
\bigl((s+1)\log(s+2)\bigr)^{n}\;\le\;(s+1)^{n+\varepsilon'}\,.
\]
Over $k$ basis elements, $(C_h H(s)+1)^{2nk}\le
\mathrm{const}\cdot(s+1)^{nk+k\varepsilon'}$ for $s\ge s_0$.
Setting $\varepsilon:=k\varepsilon'$ and absorbing all constants into~$m$,
we obtain
\[
\#\left(\faktor{I(s,B_0)}{\Tr_{K/\CC}(A)(\CC)}\right)
\;\le\;
m\,(s+1)^{nk+\varepsilon}\qquad\text{for all }s\ge 0\,,
\]
where $m=m(B_0,\mathcal A,\mathcal D,h,\varepsilon)>0$ is independent
of~$s$.
\qed

\bibliographystyle{plainurl}
\bibliography{biblio}

\Addresses
\end{document}